\documentclass[12pt]{amsart}
\usepackage{amsfonts,graphicx,amssymb,amscd,amsmath,enumerate,verbatim,calc}

%
%
%
\def\NZQ{\mathbb}               

\def\QQ{{\NZQ Q}}
\def\ZZ{{\NZQ Z}}
\def\RR{{\NZQ R}}

\def\ab{{\bold a}}
\def\hb{{\bold h}}
\def\eb{{\bold e}}

\def\eb{{\bold e}}
%
\def\opn#1#2{\def#1{\operatorname{#2}}} 
%
\opn\aff{aff} \opn\con{conv} \opn\relint{relint} \opn\st{st}
\opn\lk{lk} \opn\cn{cn} \opn\core{core} \opn\vol{vol}
\opn\link{link} \opn\star{star}

\def\Hc{{\mathcal H}}
\def\Sc{{\mathcal S}}
\def\Cc{{\mathcal C}}

\def\Pc{{\mathcal P}}

\def\Qc{{\mathcal Q}}
\newtheorem{Theorem}{Theorem}[section]
\newtheorem{Lemma}[Theorem]{Lemma}
\newtheorem{Corollary}[Theorem]{Corollary}
\newtheorem{Proposition}[Theorem]{Proposition}

\newtheorem{Question}[Theorem]{Question}
\theoremstyle{definition}
\newtheorem{Remark}[Theorem]{Remark}

\newtheorem{Example}[Theorem]{Example}

\newtheorem{Definition}[Theorem]{Definition}
%
%
\let\epsilon\varepsilon
\let\phi=\varphi
\let\kappa=\varkappa
%
%
\textwidth=15cm \textheight=22cm \topmargin=0.5cm
\oddsidemargin=0.5cm \evensidemargin=0.5cm \pagestyle{plain}
%
%
%
%
%
\begin{document}
\title{Integer decomposition property of dilated polytopes}
\author{David A. Cox, Christian Haase, Takayuki Hibi and Akihiro Higashitani}
\thanks{
{\bf 2010 Mathematics Subject Classification:}
Primary 52B20; Secondary 14Q15, 14M25. \\
\, \, \, {\bf Keywords:}
dilated polytope, 
integer decomposition property,
Hilbert basis. 
}
\address{David A. Cox, 
Department of Mathematics, Amherst College, 
Amherst, MA 01002-5000, USA}
\email{dac@math.amherst.edu}
\address{Christian Haase, Mathematics,
Goethe-Universit\"at Frankfurt/Main, Germany}
\email{haase@math.uni-frankfurt.de}
\address{Takayuki Hibi,
Department of Pure and Applied Mathematics,
Graduate School of Information Science and Technology,pe
Osaka University,
Toyonaka, Osaka 560-0043, Japan}
\email{hibi@math.sci.osaka-u.ac.jp}
\address{Akihiro Higashitani,
Department of Pure and Applied Mathematics,
Graduate School of Information Science and Technology,
Osaka University,
Toyonaka, Osaka 560-0043, Japan}
\email{a-higashitani@cr.math.sci.osaka-u.ac.jp}
\begin{abstract} 
Let $\Pc \subset \RR^N$ be an integral convex polytope of dimension $d$ 
and write $k \Pc$, where $k = 1, 2, \ldots$\,, for
dilations of $\Pc$.
We say that $\Pc$ 
possesses the integer decomposition property if, for any integer $k =
1, 2, \ldots$ and for any $\alpha \in k \Pc \cap \ZZ^{N}$, there exist
$\alpha_{1}, \ldots, \alpha_k$ belonging to $\Pc \cap \ZZ^N$
such that $\alpha = \alpha_{1} + \cdots + \alpha_k$. 
A fundamental question is to determine the integers $k > 0$ for which 
the dilated polytope $k\Pc$ possesses the integer decomposition property.
In the present paper, combinatorial invariants related to the integer
decomposition property of dilated polytopes will be proposed and studied.
\end{abstract}

\maketitle

\section*{Introduction}
Integral convex polytopes have been studied from the viewpoints of
commutative algebra and algebraic geometry together with enumerative
combinatorics, combinatorial optimization and statistics.
Recall that an {\em integral} convex polytope is a convex polytope all
of whose vertices have integer coordinates.

There is an entire network \cite[p.2313]{MFO07} of combinatorial and
algebraic properties of integral polytopes for which it is interesting
to study the behavior under dilations \cite{BGcover,BGT,KKMS}.
In the present paper, we are especially interested in the integer
decomposition property of integral convex polytopes, which is
particularly important in the theory and application of integer
programming \cite[\S22.10]{Sch}.

{\bf (0.1)}
We say that an integral convex polytope 
$\Pc \subset \RR^{N}$ 
possesses the {\em integer decomposition property} or (IDP) for short
if, for every integer $k = 1, 2, \ldots$ and for all $\alpha \in k \Pc
\cap \ZZ^{N}$, there exist $\alpha_{1}, \ldots, \alpha_k$ belonging to
$\Pc \cap \ZZ^N$ such that $\alpha = \alpha_{1} + \cdots + \alpha_k$.
Here, $k \Pc = \{ k \alpha : \alpha \in \Pc \} \subset \RR^{N}$ is the
{\em dilated polytope}.

Furthermore, we say that an integral convex polytope $\Pc \subset \RR^{N}$
is {\em very ample} if, for any sufficiently large integer $k \gg 0$ 
and for any $\alpha \in k\Pc \cap \ZZ^N$, there exist $\alpha_1, \ldots, \alpha_k$ 
belonging to $\Pc \cap \ZZ^N$ such that $\alpha = \alpha_1 + \cdots + \alpha_k$. 
Thus in particular if $\Pc$
possesses (IDP), then $\Pc$ is very ample.


{\bf (0.2)}
Let $\Cc \subset \RR^{N}$ be a {\em pointed, rational and polyhedral cone}
generated by rational vectors 
$\ab_{1}, \ldots, \ab_{m} \in \QQ^{N}$.  Thus
\[
\Cc = \RR_{\geq 0}\{\ab_1, \ldots, \ab_m\}=
\{\lambda_{1} \ab_{1} + \cdots + \lambda_{m} \ab_{m} :
\lambda_{1}, \ldots, \lambda_{m} \in \RR_{\geq 0}\}
\]  
such that $\{ 0 \}$ is the largest linear subspace contained in $\Cc$.
A finite set of integer vectors
$\{ \hb_{1}, \ldots, \hb_{s} \} \subset \ZZ^{N}$ is called a {\em Hilbert basis} of $\Cc$ if
\[
\Cc \cap \ZZ^{N} = \ZZ_{\geq 0}\{\hb_1,\ldots,\hb_s\} :=
\{\lambda_{1} \hb_{1} + \cdots + \lambda_{s} \hb_{s} :
\lambda_{1}, \ldots, \lambda_{s} \in \ZZ_{\geq 0}\}.
\]
A Hilbert basis exists~\cite{Gordan} and a minimal Hilbert basis is
unique~\cite{vanDerCorput}.
Let $\Hc(\Cc)$ denote {\em the} minimal Hilbert basis of $\Cc$.
In terms of Hilbert bases, an integral convex polytope $\Pc \subset
\RR^{N}$ is very ample if and only if for all vertices $\alpha$ of
$\Pc$ the set $(\Pc-\alpha) \cap \ZZ^N$ is 
a Hilbert basis for the cone it generates (cf.~\cite[Ex.~2.23]{BG}).

We will write $\widetilde{\Pc} \subset \RR^{N+1}$ for the integral
convex polytope
$\{ (\alpha, 1) \in \RR^{N+1} : \alpha \in \Pc \}$.
Let $\Cc(\Pc) \subset \RR^{N+1}$ denote the pointed rational
polyhedral cone generated by the vertices of $\widetilde{\Pc}$.
The {\em degree} of $(\alpha,n) \in \Cc(\Pc) \cap \ZZ^{N+1}$ is   
$\deg (\alpha,n) = n$.

{\bf (0.3)}
A simplex $\Sc \subset \RR^{N}$ is called
{\em empty} if 
$\Sc \cap \ZZ^{N}$
is the set of vertices of $\Sc$.
Given an empty simplex $\Sc \subset \RR^N$, we define
the finite subset $\text{Box}(\Sc) \subset \Cc(\Sc)$ as follows:
\[
\text{Box}(\Sc)
= \left\{\sum_{v_i \in \Sc \cap \ZZ^N} r_i (v_i,1) \in \ZZ^{N+1} \, : \,
0 \leq r_i < 1 \right\}.
\] 
We have $\Hc(\Cc(\Sc)) \subseteq \text{Box}(\Sc) \cup (\widetilde{\Sc} \cap \ZZ^{N+1})$.

Let, in general, $\Pc \subset \RR^N$ be an integral convex polytope of dimension $d$.
We then define
$\text{Box}(\Pc)$ by setting
\[
\text{Box}(\Pc)=\bigcup_{\Sc} \text{Box}(\Sc) \setminus \ZZ_{\geq 0}(\widetilde{\Pc} \cap \ZZ^{N+1}),
\]
where $\Sc$ runs over all empty simplices of dimension $d$ with $\Sc \subset \Pc$. 
Each element belonging to $\text{Box}(\Pc)$ is called a {\em hole} of $\Pc$.
We have $\Hc(\Cc(\Pc)) \subseteq \bigcup_{\Sc} \Hc(\Cc(\Sc)) \subseteq
\text{Box}(\Pc) \cup (\widetilde{\Pc} \cap \ZZ^{N+1})$.

{\bf (0.4)}
Let $\delta(\Pc) = (\delta_{0}, \delta_{1}, \ldots, \delta_{d})$
denote the {\em $\delta$-vector} (\cite[Chapter XI]{HibiRedBook}) 
of an integral convex polytope $\Pc \subset \RR^{N}$ of dimension $d$.
We write $k_{0}$ for the smallest integer $k > 0$
for which $k(\Pc \setminus \partial \Pc) \cap \ZZ^{d} \neq \emptyset$
and $i_{0}$ for the largest integer $i$ for which $\delta_{i} \neq 0$.
It is known \cite[Theorem 4.5]{BR} that 
\[
k_{0} = (d + 1) - i_0.
\]

\begin{Definition}
Given an integral convex polytope $\Pc \subset \RR^{N}$,
we introduce the invariants 
\[
\mu_\text{va}(\Pc), \, \, \mu_\text{midp}(\Pc), \, \, \mu_\text{idp}(\Pc), \, \, 
\mu_\text{Hilb}(\Pc), \, \, \mu_\text{hole}(\Pc), \, \, \mu_\text{Ehr}(\Pc),
\]
each of which is defined in what follows:
\begin{itemize}
\item
$\mu_\text{va}(\Pc)$ is the smallest integer $k > 0$ for which 
the dilated polytope $k\Pc$ is very ample; 
\item
$\mu_\text{midp}(\Pc)$ is the smallest integer $k > 0$ for which 
the dilated polytope $k\Pc$ possesses (IDP); 
\item
$\mu_\text{idp}(\Pc)$ is the smallest integer $k > 0$ for which 
the dilated polytopes $n\Pc$ possess 
(IDP) for all $n \geq k$;
\item
$\mu_\text{Hilb}(\Pc)$ is the maximal degree of elements belonging to 
the minimal Hilbert basis $\Hc(\Cc(\Pc))$ of $\Cc(\Pc)$;
\item
$\mu_\text{hole}(\Pc)$ is the maximal degree of elements belonging to $\text{Box}(\Pc)$,
with the convention that $\mu_\text{hole}(\Pc) = 1$ if $\text{Box}(\Pc) = \emptyset$; 
\item
$\mu_\text{Ehr}(\Pc)$ is the largest integer $i > 0$ for which $\delta_i \not=0$, 
where $(\delta_0,\delta_1,\ldots,\delta_d)$ denotes the $\delta$-vector of $\Pc$. 
\end{itemize}
\end{Definition}

The final goal of our research project is to find a combinatorial characterization 
of the sequences
\[
\mu(\Pc)
= (\mu_\text{va}(\Pc),\mu_\text{midp}(\Pc),\mu_\text{idp}(\Pc),
\mu_\text{Hilb}(\Pc),\mu_\text{hole}(\Pc),\mu_\text{Ehr}(\Pc))
\]
arising from integral convex polytopes $\Pc$ of dimension $d$.

First of all, in Section $1$, we give basic inequalities which the
above sequence $\mu(\Pc)$ satisfies.
More precisely, Theorem \ref{invariant} says that
\[
1 \leq \mu_\text{va}(\Pc) \leq \mu_\text{midp}(\Pc) \leq \mu_\text{idp}(\Pc)
\leq \mu_\text{hole}(\Pc) \leq \mu_\text{Ehr}(\Pc) \leq d;
\]
\[
1 \leq \mu_\text{va}(\Pc) \leq 
\mu_\text{Hilb}(\Pc) \leq \mu_\text{hole}(\Pc) \leq d - 1.
\]

Various examples will be supplied in Section $2$.
In Theorem \ref{Berkeley}, we prove that,
given integers $d \geq 3$ and $2 \leq j \leq d - 1$, 
there exists an empty simplex $\Pc$ of dimension $d$ with
$\mu(\Pc) = (j,j,j,j,j,j)$.
In Theorem \ref{Boston},
we prove that,
given an integer $d \geq 4$, 
there exists an integral convex polytope $\Pc$ of dimension $d$ 
such that $\mu_\text{Hilb}(\Pc) = d - 1$ and
$\mu_\text{midp}(\Pc) = \mu_\text{idp}(\Pc) = d - 2$.
In addition, we give examples of integral convex polytopes
$\Pc$, $\Pc'$ and $\Pc''$ with
\begin{itemize}
\item
$\mu_\text{va}(\Pc) < \mu_\text{midp}(\Pc), \, \,  
\mu_\text{va}(\Pc) < \mu_\text{Hilb}(\Pc)$;
\item
$\mu_\text{midp}(\Pc') < \mu_\text{idp}(\Pc')
< \mu_\text{hole}(\Pc'), \, \, 
\mu_\text{Hilb}(\Pc') < \mu_\text{hole}(\Pc')$;
\item
$\mu_\text{hole}(\Pc'') < \mu_\text{Ehr}(\Pc'')$.
\end{itemize}

Moreover, in Section $3$, more detailed relations between 
$\mu_\text{midp}(\Pc)$ and $\mu_\text{idp}(\Pc)$ will be discussed
(Theorem \ref{teiri}).
Finally, we compute in Section $4$ the invariants of edge polytopes
arising from finite graphs.

\section{Invariants related to dilated polytopes}

In this section, we discuss the sequence $\mu(\Pc)$ of the invariants 
of an integral convex polytopes $\Pc$ of dimension $d$ related to (IDP). 
More precisely, we prove the following.

\begin{Theorem}\label{invariant}
For the invariants appearing in the introduction 
of an integral convex polytope $\Pc \subset \RR^N$ of dimension $d$, 
the following inequalities hold: 
\begin{equation}\label{diagram}
\begin{aligned}
&1 \leq \mu_\text{{\em va}}(\Pc) \leq \mu_\text{{\em midp}}(\Pc) \leq 
\mu_\text{{\em idp}}(\Pc) \leq \mu_\text{{\em hole}}(\Pc) \leq \mu_\text{{\em Ehr}}(\Pc) \leq d \\
&\;\;\; \text{{\em and }} \;\; 
1 \leq \mu_\text{{\em va}}(\Pc) \leq \mu_\text{{\em Hilb}}(\Pc) \leq \mu_\text{{\em hole}}(\Pc) \leq d-1. 
\end{aligned}
\end{equation}
\end{Theorem}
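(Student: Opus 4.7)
The plan is to verify each of the nine inequalities separately. Five of them are immediate from the definitions and the background: $1 \leq \mu_\text{va}(\Pc)$ by positivity; $\mu_\text{va}(\Pc) \leq \mu_\text{midp}(\Pc)$ since IDP implies very ample, as noted in (0.1); $\mu_\text{midp}(\Pc) \leq \mu_\text{idp}(\Pc)$ by comparing the defining conditions; $\mu_\text{Ehr}(\Pc) \leq d$ because $\delta(\Pc)$ has length $d+1$; and $\mu_\text{Hilb}(\Pc) \leq \mu_\text{hole}(\Pc)$ from the inclusion $\Hc(\Cc(\Pc)) \subseteq \text{Box}(\Pc) \cup (\widetilde{\Pc} \cap \ZZ^{N+1})$ recorded in (0.3), since elements of $\widetilde{\Pc}$ have degree $1$.

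For the two bounds $\mu_\text{hole}(\Pc) \leq \mu_\text{Ehr}(\Pc)$ and $\mu_\text{hole}(\Pc) \leq d - 1$, I use a common reflection trick. Given a hole $\beta \in \text{Box}(\Sc)$ of degree $j$ coming from an empty $d$-simplex $\Sc$ with vertices $v_0, \ldots, v_d$, write $\beta = \sum_{i=0}^d r_i(v_i, 1)$ with $r_i \in [0,1)$ and $\sum r_i = j$, and form $\sigma := \sum_{i=0}^d (v_i, 1) - \beta = \sum_{i=0}^d(1 - r_i)(v_i, 1)$. Every coefficient $1 - r_i$ is strictly positive, so $\sigma$ lies in the relative interior of $\Cc(\Sc)$; because $\dim \Sc = \dim \Pc$ forces $\Sc^\circ \subseteq \Pc^\circ$, the lattice point $\sigma$ exhibits an interior lattice point of $(d + 1 - j)\Pc$. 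Hence $k_0 \leq d + 1 - j$, and the identity $k_0 = (d+1) - i_0$ from (0.4) yields $j \leq i_0 = \mu_\text{Ehr}(\Pc)$. For the second bound, if $j = d$ then $\sigma$ has degree $1$ and must equal some vertex $(v_\ell, 1)$, but linear independence of $\{(v_i, 1)\}_{i=0}^d$ then forces $r_i = 1$ for every $i \neq \ell$, contradicting $r_i < 1$.

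For $\mu_\text{va}(\Pc) \leq \mu_\text{Hilb}(\Pc)$ I plan to exploit the vertex-cone characterization of very ampleness from (0.2): $k\Pc$ is very ample iff for every vertex $v$ of $\Pc$, every element $h$ of the minimal Hilbert basis of $\RR_{\geq 0}(\Pc - v)$ lies in $k(\Pc - v)$. Given such an $h$, let $k$ be the smallest positive integer with $(kv + h, k) \in \Cc(\Pc)$ and expand $(kv + h, k) = \sum_j m_j(u_j, \deg u_j)$ in $\Hc(\Cc(\Pc))$. Subtracting $\deg u_j \cdot (v, 1)$ from each term and projecting to $\ZZ^N \times \{0\}$ produces the identity $h = \sum_j m_j(u_j - \deg u_j \cdot v)$ in the vertex cone, with each summand in $\RR_{\geq 0}(\Pc - v) \cap \ZZ^N$. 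Minimality of $h$ forces exactly one of these summands to be nonzero and with multiplicity $1$, giving some $(u_{j_0}, \deg u_{j_0}) \in \Hc(\Cc(\Pc))$ with $h = u_{j_0} - \deg u_{j_0} \cdot v$ and $\deg u_{j_0} \leq \mu_\text{Hilb}(\Pc)$; hence $h \in \deg u_{j_0} \cdot (\Pc - v) \subseteq \mu_\text{Hilb}(\Pc) \cdot (\Pc - v)$.

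The heart of the theorem, and the place I expect the main obstacle, is $\mu_\text{idp}(\Pc) \leq \mu_\text{hole}(\Pc)$. Fix $n \geq \mu_\text{hole}(\Pc) =: h$, take $(\alpha, mn) \in \Cc(\Pc) \cap \ZZ^{N+1}$ with $m \geq 1$, and pick a triangulation of $\Pc$ into empty $d$-simplices together with a simplex $\Sc$ satisfying $(\alpha, mn) \in \Cc(\Sc)$. The unique half-open decomposition $\Cc(\Sc) \cap \ZZ^{N+1} = \bigsqcup_{b \in \text{Box}(\Sc)}(b + \ZZ_{\geq 0}(\widetilde{\Sc} \cap \ZZ^{N+1}))$ supplies $(\alpha, mn) = b + \sum_i \lambda_i(v_i, 1)$ for a unique $b \in \text{Box}(\Sc)$. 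If $b \in \ZZ_{\geq 0}(\widetilde{\Pc} \cap \ZZ^{N+1})$, substituting its degree-$1$ expression reduces the problem to grouping $mn$ lattice points of $\Pc$ into $m$ blocks of $n$ vertices each. Otherwise $b \in \text{Box}(\Pc)$ is a genuine hole with $\deg b \leq h \leq n$, and I pick $\mu_i \leq \lambda_i$ with $\sum \mu_i = n - \deg b$ (feasible since $\sum \lambda_i = mn - \deg b \geq n - \deg b$), take $(\gamma_1, n) := b + \sum_i \mu_i (v_i, 1)$ as the first piece (belonging to $n\Pc$ because its normalized form is a convex combination of $b/\deg b \in \Sc \subseteq \Pc$ with the vertices $v_i$), and partition the remaining $(m-1)n$ degree-$1$ terms into $m-1$ blocks of $n$. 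The chief subtlety is the case analysis on $b$ together with verifying that the constructed first piece really lies in $n\Pc \cap \ZZ^N$ rather than only in $\Cc(\Sc)$ at height $n$.
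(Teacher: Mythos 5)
Your proposal is correct and follows essentially the same route as the paper: the same parallelepiped/box decomposition of $\Cc(\Pc)\cap\ZZ^{N+1}$ followed by grouping for $\mu_\text{idp}(\Pc)\leq\mu_\text{hole}(\Pc)$, the same reflection trick $\sum_i(1-r_i)(v_i,1)$ for $\mu_\text{hole}(\Pc)\leq\mu_\text{Ehr}(\Pc)$ and $\mu_\text{hole}(\Pc)\leq d-1$, and the same vertex-cone Hilbert-basis characterization of very ampleness for $\mu_\text{va}(\Pc)\leq\mu_\text{Hilb}(\Pc)$. The only deviations are cosmetic: you justify the decomposition via a triangulation into empty simplices where the paper cites Gordan and Carath\'eodory, and you use irreducibility of minimal Hilbert basis elements of the vertex cone where the paper rewrites an arbitrary lattice point directly.
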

\begin{proof}
The inequalities 
$1 \leq \mu_\text{va}(\Pc) \leq \mu_\text{midp}(\Pc) \leq \mu_\text{idp}(\Pc)$ and $\mu_\text{Ehr}(\Pc) \leq d$ 
are clear from their definitions. 
On the other hand, since the assertions are obvious if $\Pc$ has (IDP), 
we assume that $\text{Box}(\Pc)$ is not empty. \\
$\bullet$ \underline{$\mu_\text{idp}(\Pc) \leq \mu_\text{hole}(\Pc)$:}
This inequality is proved, though not stated, in
\cite{LiuTrotterZiegler}. We give the proof for the sake of completeness.

It follows from Gordan's Lemma \cite[Theorem 16.4]{Sch} and
Carath\'eodory's Theorem \cite[Corollary 7.1i]{Sch} that $\Cc(\Pc)
\cap \ZZ^{N+1}$ consists of the elements of
$$ 
\{ \alpha  + x \ : \ \alpha \in \text{Box}(\Pc) \cup \{0\}, \ x \in \ZZ_{\geq
  0}(\widetilde{\Pc} \cap \ZZ^{N+1})\}.$$
Thus for $n \geq \mu_\text{hole}(\Pc)$ and an element $\alpha \in \ell
(n\Pc) \cap \ZZ^N$, we can write $(\alpha,\ell n) \in \Cc(\Pc) \cap
\ZZ^{N+1}$ as
$
(\alpha_0,n_0) + (\alpha_1,1) + \ldots + (\alpha_r,1)
$
with $n_0 \le n$. These summands can now be grouped into $\ell$
elements of $n\widetilde{\Pc} \cap \ZZ^{N+1}$. \\
%
$\bullet$ \underline{$\mu_\text{hole}(\Pc) \leq \mu_\text{Ehr}(\Pc)$:}
This inequality follows from the fact that $\delta_i$ counts box
points of degree $i$ in a triangulation of $\Pc$, cf.\ \cite[Pf.\ of
Thm.\ 3.12]{BR}. Here is the argument.

Let $(\alpha,\mu_\text{hole}(\Pc)) \in \text{Box}(\Pc)$ attain
$\mu_\text{hole}(\Pc)$. Then, by definition of $\text{Box}(\Pc)$, we
can describe $(\alpha,\mu_\text{hole}(\Pc))$ as a linear combination
of $(d+1)$ linearly independent lattice vectors in $\widetilde{\Pc}$.
Say, $(\alpha,\mu_\text{hole}(\Pc)) = \sum_{i=0}^{d} r_i (v_i,1)$ with
$0 \le r_i <1$. Let $\beta=\sum_{i=0}^{d} (1-r_i) (v_i,1) \in
\ZZ^{N+1}$. Since $1-r_i > 0$ and $\sum_{i=0}^{d} (1-r_i) =
d+1-\mu_\text{hole}(\Pc)$, one has $\beta \in
(d+1-\mu_\text{hole}(\Pc))(\Pc \setminus \partial \Pc) \cap \ZZ^N$,
certifying $d+1-\mu_\text{Ehr}(\Pc) \leq d+1-\mu_\text{hole}(\Pc)$.
\\
$\bullet$ \underline{$\mu_\text{va}(\Pc) \leq \mu_\text{Hilb}(\Pc)$:}
Let $\alpha$ be a vertex of $\Pc$, and let $\beta$ be an integer point
in the cone generated by $\Pc-\alpha$. Then, for a sufficiently large integer $k$, 
we have $(\beta+k\alpha,k) \in \Cc(\Pc) \cap
\ZZ^{N+1}$. This point can be written as a nonnegative integral linear
combination of $\Hc(\Cc(\Pc))$: $(\beta+k\alpha,k) =
\sum_{h \in \Hc(\Cc(\Pc))} w_h h$ with $w_h \in \ZZ_{\geq 0}$. This
implies that
$$(\beta,0) \ = \ \sum_{h \in \Hc(\Cc(\Pc))} w_h \bigl[ (h+
  (\mu_\text{Hilb}(\Pc)-\deg(h)) (\alpha,1)) \ - \
  \mu_\text{Hilb}(\Pc) (\alpha,1) \bigr]$$
is a nonnegative integral linear combination of integer points in
$\mu_\text{Hilb}(\Pc) (\Pc-\alpha) \times \{0\}$, showing that
$\mu_\text{Hilb}(\Pc)\Pc$ is very ample.
\\
$\bullet$ \underline{$\mu_\text{Hilb}(\Pc) \leq
  \mu_\text{hole}(\Pc)$:} 
This follows from $\Hc(\Cc(\Pc)) \subseteq \text{Box}(\Pc) \cup (\widetilde{\Pc} \cap \ZZ^{N+1})$.
\\
$\bullet$ \underline{$\mu_\text{hole}(\Pc) \leq d-1$:}
Here is the argument from \cite{LiuTrotterZiegler}.
Let $\Sc \subset \RR^N$ be an empty simplex of dimension $d$ and let
$v_0,v_1,\ldots,v_d$ be its vertices.
Given $v \in \text{Box}(\Sc)$ there are $r_0,r_1,\ldots,r_d$ with
$0 \leq r_i < 1$ such that $v=\sum_{i=0}^d r_i(v_i,1)$. Since $r_i
<1$, one has $\deg(v)=\sum_{i=0}^d r_i < d+1$.
Suppose that $\deg(v)=d$. Then all $r_i$ must be positive. 
Moreover, we have $\sum_{i=0}^d(1-r_i)=1$ and $0 < 1-r_i <1$. 
Thus the integer point $\sum_{i=0}^d (1-r_i) v_i$ belongs to the
interior of $\Sc$, 
a contradiction. Hence $\deg(v) \leq d-1$.
\end{proof}

\section{Proper inequalities of \eqref{diagram}}

In this section, we present a lot of examples of integral convex polytopes. 
Each of the examples satisfies a proper inequality in \eqref{diagram}. 

Before giving them, we see that there exists an integral convex polytope $\Pc$ attaining 
$\mu_\text{va}(\Pc)=\mu_\text{midp}(\Pc)=\mu_\text{idp}(\Pc)=\mu_\text{Hilb}(\Pc)=\mu_\text{hole}(\Pc)=\mu_\text{Ehr}(\Pc)$. 

\begin{Theorem}\label{Berkeley}
Given integers $d \geq 3$ and $2 \leq j \leq d - 1$, 
there exists an empty simplex $\Pc$ of dimension $d$ with 
$\mu_\text{{\em va}}(\Pc)=\mu_\text{{\em midp}}(\Pc)=\mu_\text{{\em idp}}(\Pc)
=\mu_\text{{\em Hilb}}(\Pc)=\mu_\text{{\em hole}}(\Pc)=\mu_\text{{\em Ehr}}(\Pc)=j$. 
\end{Theorem}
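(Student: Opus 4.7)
The plan is to exhibit an empty simplex $\Pc \subset \RR^d$ of normalized volume $j$ whose unique maximum-degree box element pins down every invariant at once. Concretely, I would take $\Pc = \text{conv}(0, e_1, \ldots, e_{d-1}, v_d)$ where $v_d = (1, \ldots, 1, 0, \ldots, 0, j) \in \ZZ^d$ consists of $j$ ones, $d-1-j$ zeros, and a final entry $j$; the matrix with rows $v_1, \ldots, v_d$ has determinant $j$, so $\Pc$ has normalized volume $j$.

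First I would verify $\Pc$ is empty. A lattice point $\sum \lambda_i v_i$ of $\Pc$ has last coordinate $j\lambda_d \in \ZZ$, so $\lambda_d = k/j$ for some $k$; for $0 < k < j$ integrality of the first $j$ spatial coordinates forces $\lambda_i \geq (j-k)/j$ for $i=1,\ldots,j$, from which $\lambda_0 = 1 - \sum \lambda_i < 0$. I would then enumerate the box points of $\Cc(\Pc)$: for each $\ell \in \{1,\ldots,j-1\}$ one finds exactly one box element, with $r_d = \ell/j$, $r_i = (j-\ell)/j$ for $1 \leq i \leq j$ and $r_i = 0$ for $j < i < d$, of degree $n_\ell = j-\ell+1$. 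The unique maximum-degree box element is therefore $(b,j)$ with $b = (1, \ldots, 1, 0, \ldots, 0, 1) \in \ZZ^d$, giving $\mu_\text{hole}(\Pc) = j$; and since $\Pc$ is an empty simplex, the $\delta$-vector equals the degree-generating function of the box, so $\mu_\text{Ehr}(\Pc) = j$.

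For the remaining invariants I would analyze $b$ directly. The key fact is that the last spatial coordinate of $b$ equals $1$. In any decomposition $b = x + y$ inside the vertex cone $\Cc_{v_0} = \RR_{\geq 0}\{e_1, \ldots, e_{d-1}, v_d\}$ intersected with $\ZZ^d$ with $x,y$ nonzero, one summand (say $x$) must have vanishing last coordinate, so $x$ lies in the subcone of $e_1,\ldots,e_{d-1}$; the condition $y = b - x \in \Cc_{v_0}$ forces the $v_d$-coefficient of $y$ to equal $1/j$, and nonnegativity of $\mu_i^y = (j-1)/j - x_i$ for $i \leq j$ combined with $y_i = -x_i \geq 0$ for $j < i < d$ forces $x = 0$, a contradiction. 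Hence $b \in \Hc(\Cc_{v_0})$. The same bookkeeping applied in the homogenized cone (together with the observation that $(b, j-m) \in \Cc(\Pc)$ fails for $1 \leq m \leq j-1$) yields $(b,j) \in \Hc(\Cc(\Pc))$, so $\mu_\text{Hilb}(\Pc) = j$. A direct computation using $\lambda_d = 1/(nj)$ gives $b \in n\Pc$ iff $n \geq j$, so the Hilbert-basis element $b$ is absent from $k\Pc$ for every $k < j$, forcing $\mu_\text{va}(\Pc) \geq j$.

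Combining these with Theorem \ref{invariant} produces $j \leq \mu_\text{va}(\Pc) \leq \mu_\text{midp}(\Pc) \leq \mu_\text{idp}(\Pc) \leq \mu_\text{hole}(\Pc) = j$ together with $\mu_\text{va}(\Pc) \leq \mu_\text{Hilb}(\Pc) = j$, so all six invariants collapse to $j$. The main obstacle is the Hilbert-basis minimality argument for $b$; it is what dictates the precise shape of $v_d$, since the first $j$ coordinates being $1$ and the subsequent spatial coordinates being $0$ are exactly what make the last-coordinate bookkeeping force the decomposition to degenerate.
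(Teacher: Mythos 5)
Your proposal is correct, and it works with exactly the same simplex as the paper: your $v_d=\eb_1+\cdots+\eb_j+j\eb_d$ is the last row of the paper's matrix \eqref{mat}, and your overall squeeze (get $\mu_\text{hole}(\Pc)=\mu_\text{Ehr}(\Pc)=j$ from the box/$\delta$-vector, prove $\mu_\text{va}(\Pc)\geq j$, then apply Theorem \ref{invariant}) is the paper's skeleton. The genuine difference is the lower-bound step $\mu_\text{va}(\Pc)\geq j$. The paper first computes the full Hilbert basis $\Hc(\Cc(\Pc))$ and then, for each $k<j$, exhibits lattice points of $(g+\ell)(k\Pc)$ for arbitrarily large $\ell$ that cannot be written as sums of $g+\ell$ lattice points of $k\Pc$, violating the definition of very ampleness directly. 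You instead show that the single point $b=\eb_1+\cdots+\eb_j+\eb_d$ is irreducible in the monoid of the vertex cone at the origin and lies in $n\Pc$ only for $n\geq j$, and conclude via the characterization of very ampleness recalled in (0.2): any set generating $\Cc\cap\ZZ^d$ must contain every irreducible element, so $(k\Pc)\cap\ZZ^d$ cannot be a Hilbert basis of the vertex cone when $k<j$. This buys a cleaner argument (no full Hilbert-basis computation, no $\ell$-parametrized family of bad points), at the cost of leaning on the cited criterion, whereas the paper's verification is self-contained from the definition; your last-coordinate bookkeeping for irreducibility, including the extra case $(b,j)=(0,p)+(b,j-p)$ in the homogenized cone, is sound. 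Two points worth making explicit when writing this up: that the step ``$b$ irreducible and $b\notin k\Pc$ implies $k\Pc$ not very ample'' is precisely an application of that criterion at the vertex $0$, and that the degree-$j$ box element is a genuine hole, i.e.\ not in $\ZZ_{\geq 0}(\widetilde{\Pc}\cap\ZZ^{d+1})$ (immediate, since its $d$th coordinate is $1$ while such combinations have $d$th coordinate in $j\ZZ_{\geq 0}$, or from the irreducibility of $(b,j)$ you prove anyway); also, your identification of the $\delta$-vector with the degree generating function of the half-open parallelepiped is the standard fact the paper invokes via \cite[Proposition 27.7]{HibiRedBook}, so your enumeration reproves its computation $\delta=(1,0,1,\ldots,1,0,\ldots,0)$.
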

\begin{proof}
Fix positive integers $d$ and $j$ with $d \geq 3$ and $2 \leq j \leq d-1$. 
Write $\Pc(d,j) \subset \RR^{d}$ for the convex hull of $\{ v_0,v_1,\ldots,v_d \} \subset \ZZ^N$, 
where $v_0={\bf 0}=(0,\ldots,0)$ and $v_i,i=1,\ldots,d,$ is the $i$th row vector of the matrix 
\begin{eqnarray}\label{mat}
\begin{pmatrix}
1      &0      &0      &\cdots &\cdots &\cdots &0 \\
0      &1      &\ddots &       &       &       &0 \\
\vdots &\ddots &\ddots &\ddots &       &       &\vdots \\
\vdots &       &\ddots &\ddots &\ddots &       &\vdots \\
\vdots &       &       &\ddots &\ddots &\ddots &\vdots \\
0      &       &       &       &\ddots &1      &0 \\
1      &\cdots &1      &0      &\cdots &0      &j
\end{pmatrix}. 
\end{eqnarray}
Here there are $j$ 1's in the $d$th row. 
Let $\Pc=\Pc(d,j)$ and $(\delta_0,\delta_1,\ldots,\delta_d)$ be the $\delta$-vector of $\Pc$.
We prove that $\Pc$ enjoys the required properties. 

Let $\eb_i \in \RR^d$ for $i=1,\ldots,d$ denote a unit coordinate vector of $\RR^d$. 
Since the determinant of \eqref{mat} is $j$, the normalized volume of $\Pc$ is $j$. Moreover,  
\begin{eqnarray}\label{hilb}
\frac{q}{j}(v_0,1)+\frac{q}{j}\sum_{i=1}^j(v_i,1)+\frac{j-q}{j}(v_d,1)=(\eb_1+\cdots+\eb_j+(j-q)\eb_d,q+1), 
\end{eqnarray}
where $q=1,\ldots,j-1$. 
Thus $\delta_q \geq 1$ for $q=2,\ldots,j$ (\cite[Proposition 27.7]{HibiRedBook}). 
Since $\sum_{i=0}^d \delta_i =j$ and each $\delta_i$ is nonnegative, one has 
$$(\delta_0,\delta_1,\ldots,\delta_d)=(1,0,\underbrace{1,1,\ldots,1}_{j-1},0,\ldots,0).$$ 
In particular, $\mu_\text{Ehr}(\Pc)=j$. 
Moreover, from $\delta_1=0$, $\Pc$ is an empty simplex. 
Thus, once we show $\mu_\text{va}(\Pc) \geq j$, we conclude that 
$\Pc$ has the desired properties by \eqref{diagram} in Theorem \ref{invariant}. 

Using \eqref{hilb}, one can show without difficulty that the Hilbert basis $\Hc(\Cc(\Pc))$ is 
\[(\widetilde{\Pc} \cap \ZZ^{d+1}) \cup \{(\eb_1+\cdots+\eb_j+(j-q+1)\eb_d,q) : q =2,\ldots,j\}.\]
Now, we show that $k \Pc$ cannot be very ample for $1 \leq k < j$. 
Let $k$ be an integer with $1 \leq k <j$ 
and $m$ the least common multiple of $k$ and $j$.
Write $m=kg$ with $g \geq 2$. Let 
\[
\alpha=(\alpha_0, m+\ell k), \; 
\alpha_0=(m-j+\ell k +1)\eb_1 + \eb_2+\cdots+\eb_j+\eb_d, 
\]
where $\ell$ is an arbitrary nonnegative integer. Since 
\[\alpha=(m-j+\ell k)(v_1,1)+(\eb_1+\cdots+\eb_j+\eb_d,j), \]
it follows that $\alpha$ belongs to $\Cc(\Pc) \cap \ZZ^{d+1}$. 
This implies, first, that $\alpha \not\in \ZZ_{\geq 0} (\widetilde{\Pc} \cap \ZZ^{d+1})$ 
and, second, that $$\alpha_0 \in (m+\ell k)\Pc \cap \ZZ^N = (g+\ell)(k \Pc) \cap \ZZ^N.$$ 
If $k\Pc$ was very ample, then for sufficiently large, we could write 
$\alpha_0=\alpha_1+\cdots+\alpha_{g+\ell}$, 
where $\alpha_1,\ldots,\alpha_{g+\ell} \in k \Pc \cap \ZZ^N$. 
Then the $d$th coordinate of each of $\alpha_1,\ldots,\alpha_{g+\ell}$ must be 0 or 1. 
Consider $(\alpha_i,k) \in \Cc(\Pc) \cap \ZZ^{d+1}$ for $1 \leq i \leq g+\ell$. 
Since each $h \in \Hc(\Cc(\Pc))$ with $2 \leq \deg h \leq k$ is of the form 
$$h = (\eb_1+\cdots+\eb_j+(j-i+1)\eb_d,i), $$
where $i=2,\ldots,k$, none of $\alpha_1,\ldots,\alpha_{g+\ell}$ can be expressed 
by using such elements. Thus each of $(\alpha_1,k),\ldots,(\alpha_{g+\ell},k)$ must 
be written as the sum of $k$ elements belonging to $\widetilde{\Pc} \cap \ZZ^{d+1}$. 
It then follows that $\alpha=(\alpha_0,m+\ell k)$ can be written as the sum of $(m+\ell k)$ elements 
belonging to $\widetilde{\Pc} \cap \ZZ^{d+1}$. 
This contradicts $\alpha \not\in \ZZ_{\geq 0}(\widetilde{\Pc} \cap \ZZ^{d+1})$. 
Consequently, $k\Pc$ cannot be very ample, as required. 
\end{proof}

The following three examples (Example \ref{Higashitani}, \ref{rei} and \ref{Ehr}) 
show the existence of integral convex polytopes 
attaining each of proper inequalities of \eqref{diagram}. 

\begin{Example}[$\mu_\text{va}(\Pc) < \mu_\text{midp}(\Pc)$ and
  $\mu_\text{va}(\Pc) < \mu_\text{Hilb}(\Pc)$]\label{Higashitani}
In \cite[Theorem 0.1]{H}, for each $d \geq 3$, the fourth author establishes an example of 
a very ample integral convex polytope not having (IDP). 
From the proof there, we know that this polytope satisfies 
$\mu_\text{va}(\Pc)=1$ but $\mu_\text{midp}(\Pc) = \mu_\text{Hilb}(\Pc)= 2$. 
\end{Example}

\begin{Example}[$\mu_\text{midp}(\Pc) < \mu_\text{idp}(\Pc) <
  \mu_\text{hole}(\Pc)$ and $\mu_\text{Hilb}(\Pc) <
  \mu_\text{hole}(\Pc)$]
\label{rei}
Let $d=2m-1$ with $m \geq 4$ and $\Pc$ be the integral simplex whose
vertex set is
$\{{\bf 0}, \eb_1, \ldots,\eb_{d-1},
(m-1)\eb_1+\cdots+(m-1)\eb_{d-1}+m \eb_d\}$.
Then it is immediate to see that one has  
$$\text{Box}(\Pc)=\{(j,\ldots,j,2j) : j=1,\ldots,m-1\}.$$ 
Thus, for $j=2,\ldots,m-1$, we can write
$(j,\ldots,j,2j)=j(1,\ldots,1,2)$.
This implies that $\{x \in \Hc(\Cc(\Pc)) : \text{deg}(x) \geq 2\}=\{(1,\ldots,1,2)\}$.
Hence $\mu_\text{Hilb}(\Pc)=2$, while $\mu_\text{hole}(\Pc)=2m-2=d-1$.


Moreover, 
we also know that
\begin{equation}\label{tousiki}
\begin{aligned}
&\Cc(\Pc) \cap \ZZ^{d+1} = \ZZ_{\geq 0}(\widetilde{\Pc} \cap
\ZZ^{d+1}) \cup \\
&\quad\quad\quad\quad\quad\quad\quad\quad\quad
\{(j,\ldots,j,2j)+ x : 1 \leq j \leq m-1, x \in \ZZ_{\geq
  0}(\widetilde{\Pc} \cap \ZZ^{d+1})\}. 
\end{aligned} 
\end{equation}
It then follows from \eqref{tousiki} that for every element $\alpha$
in $2k \Pc \cap \ZZ^d$
with $k \geq 1$, we can write $\alpha=\alpha_1+\cdots+\alpha_\ell$, 
where $k \leq \ell \leq 2k$ and $\alpha_i \in \Pc \cap \ZZ^d$ or
$\alpha_i=(1,\ldots,1) \in 2 \Pc \cap \ZZ^d$.
By rewriting appropriately, we can express $\alpha=\alpha_1'+\cdots+\alpha_k'$, 
where $\alpha_i' \in 2 \Pc \cap \ZZ^d$. This means that $\mu_\text{midp}(\Pc)=2$. 

On the other hand, we have $(3,\ldots,3) \in 2(3 \Pc) \cap \ZZ^d$ but 
$(3,\ldots,3) \not\in \{\alpha + \beta : \alpha, \beta \in 3\Pc \cap \ZZ^d\}$ 
because of $3 \leq m-1$ and \eqref{tousiki}. Similarly, $k \Pc$ does not possess 
(IDP) when $k$ is odd and $k \leq m-1$. 
However, if $k \geq m$, then $\Qc=k \Pc$ has (IDP). 
In fact, for $\alpha \in \ell \Qc \cap \ZZ^d$ with $\ell \geq 2$, 
since $\ell k \geq 2k \geq 2m$ and $\text{Box}(\Pc)$ has at most 
degree $2m-2$ elements, we can express $\alpha$ as 
$\alpha=(j,\ldots,j) + \alpha'$, 
where $1 \leq j \leq m-1$ and $\alpha' \in \{\alpha_1'+\cdots + \alpha_q' : 
\alpha_i \in \Pc \cap \ZZ^d, q \geq 2\}$. 
Thanks to $q \geq 2$, $\alpha$ can be described as a sum of $\ell$
elements belonging to $\Qc \cap \ZZ^d$. Hence we obtain 
\begin{eqnarray*}
\mu_\text{idp}(\Pc)=
\begin{cases}
m-1 \;&\text{ if $m$ is odd}, \\
m  &\text{ if $m$ is even}. 
\end{cases}
\end{eqnarray*}

Therefore, in summary, 
$$2=\mu_\text{Hilb}(\Pc)=\mu_\text{midp}(\Pc)<
\mu_\text{idp}(\Pc)=2 \left\lfloor \frac{m}{2} \right\rfloor < \mu_\text{hole}(\Pc)=2m-2.$$ 
\end{Example}

\begin{Example}[$\mu_\text{hole}(\Pc) <
  \mu_\text{Ehr}(\Pc)$]\label{Ehr}
When $\Pc$ is an integral convex polytope of dimesnion $d$ 
which does not have (IDP) and contains an integer point in its interior, 
one has $\mu_\text{Ehr}(\Pc)=d$ but $\mu_\text{hole}(\Pc) \leq d-1$. 
For example, let us consider the integral simplex $\Pc$ of dimension $d \geq 3$ 
whose vertices are {\bf 0} and the row vectors of $d \times d$ matrix 
\begin{eqnarray*}
\begin{pmatrix}
1      &0      &\cdots &\cdots &0 \\
0      &1      &\ddots &       &\vdots \\
\vdots &\ddots &\ddots &\ddots &\vdots \\
0      &\cdots &0      &1      &0 \\
d+1    &\cdots &\cdots &d+1    &d+2
\end{pmatrix}. 
\end{eqnarray*}
Let $v_0={\bf 0}$ and let $v_i$ denote the $i$th row vector. Then the integer point 
$$\frac{2}{d+2}v_0+\frac{1}{d+2}(v_1+\cdots+v_d)=(1,\ldots,1)$$ 
is contained in the interior of $\Pc$, implying $\mu_\text{Ehr}(\Pc)=d$. On the other hand, 
it is easy to see that 
$$\text{Box}(\Pc)=\left\{\left(\left\lfloor\frac{d+1}{2}\right\rfloor+1,\ldots,\left\lfloor\frac{d+1}{2}\right\rfloor+1, 
\left\lfloor\frac{d+1}{2}\right\rfloor \right) \right\},$$ 
implying $\mu_\text{hole}(\Pc)=\left\lfloor\frac{d+1}{2}\right\rfloor$. 
\end{Example}

Next, we consider possible relations between $\mu_\text{Hilb}(\Pc)$ and $\mu_\text{midp}(\Pc)$ 
and also between $\mu_\text{Hilb}(\Pc)$ and $\mu_\text{idp}(\Pc)$. 
As is shown below, there are no relations between them. 
\begin{Example}[$\mu_\text{Hilb}(\Pc)<\mu_\text{midp}(\Pc)$]\label{rei3}
The following integral simplex $\Pc$ of dimension 13 has
$\mu_\text{Hilb}(\Pc)=3$ but $\mu_\text{midp}(\Pc) = 4$:
Let $\Pc$ be a convex hull of {\bf 0} and the row vectors of the matrix 
$\begin{pmatrix}
A &0 \\
0 &B
\end{pmatrix}$, where $A$ (resp. $B$) is a $7 \times 7$ (resp. $6 \times 6$) matrix such that 
\begin{eqnarray*}
\begin{pmatrix}
1      &0      &\cdots &\cdots &0 \\
0      &1      &\ddots &       &\vdots \\
\vdots &\ddots &\ddots &\ddots &\vdots \\
0      &\cdots &0      &1      &0 \\
3      &\cdots &\cdots &3      &4
\end{pmatrix} \text{ (resp. }
\begin{pmatrix}
1      &0      &\cdots &\cdots &0 \\
0      &1      &\ddots &       &\vdots \\
\vdots &\ddots &\ddots &\ddots &\vdots \\
0      &\cdots &0      &1      &0 \\
1      &\cdots &\cdots &1      &2
\end{pmatrix}). 
\end{eqnarray*}
Notice that $A$ corresponds to the polytope in Example \ref{rei} in the case of $m=4$. 
It can be verified that 
$$\Hc(\Cc(\Pc))=(\widetilde{\Pc} \cap \ZZ^{d+1}) \cup 
\{(\underbrace{1,\ldots,1}_7,\underbrace{0,\ldots,0}_6,2), 
(\underbrace{0,\ldots,0}_7,\underbrace{1,\ldots,1}_6,3)\}.$$ 
Thus $\mu_\text{Hilb}(\Pc)=3$. On the other hand, neither $2\Pc$ nor $3\Pc$ possesses (IDP). 
(In fact, $(0,\ldots,0,1,\ldots,1,4) \in \Cc(2\Pc) \cap \ZZ^{d+1} \setminus \ZZ_{\geq 0}(\widetilde{2\Pc} \cap \ZZ^{d+1})$ 
and $(3,\ldots,3,0,\ldots,0,6) \in \Cc(3\Pc) \cap \ZZ^{d+1} \setminus \ZZ_{\geq 0}(\widetilde{3\Pc} \cap \ZZ^{d+1})$.) 
Hence $\mu_\text{midp}(\Pc) \geq 4$. In fact, one can show that $\mu_\text{midp}(\Pc)=4$. 
\end{Example}

Note that an example attaining $\mu_\text{Hilb}(\Pc)<\mu_\text{idp}(\Pc)$ 
has been already given in Example \ref{rei}. 
The following theorem gives an example attaining 
both $\mu_\text{Hilb}(\Pc)>\mu_\text{midp}(\Pc)$ and $\mu_\text{Hilb}(\Pc)>\mu_\text{idp}(\Pc)$. 

\begin{Theorem}\label{Boston}
Given an integer $d \geq 4$, 
there exists an integral convex polytope $\Pc$ of dimension $d$ 
such that $\mu_\text{{\em Hilb}}(\Pc) = d - 1$ and $\mu_\text{{\em midp}}(\Pc)=\mu_\text{{\em idp}}(\Pc)=d-2$. 
\end{Theorem}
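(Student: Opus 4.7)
The plan is to exhibit an explicit integral polytope $\Pc \subset \RR^d$ and verify the three invariant equalities. A natural starting point is the Berkeley simplex $\Pc(d, d-1)$ from Theorem \ref{Berkeley}, which already satisfies $\mu_{\text{Hilb}} = \mu_{\text{hole}} = d - 1$; the task is to modify it so that $\mu_{\text{midp}}$ and $\mu_{\text{idp}}$ drop to $d - 2$ while $\mu_{\text{Hilb}}$ stays at $d - 1$. A promising construction is to adjoin a single auxiliary lattice point to $\Pc(d, d-1)$, placed along the ray through the degree $d - 1$ Hilbert generator $(\eb_1 + \cdots + \eb_{d-1} + \eb_d, d-1)$ at a position that becomes a lattice point of $(d - 2)\Pc$ but remains outside $(d - 3)\Pc$. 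Alternatively, in the spirit of Example \ref{rei3}, one can glue a Berkeley-type simplex of dimension close to $d$ with a low-dimensional companion in block-diagonal fashion to achieve the same effect.

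Once $\Pc$ is fixed, I would first compute $\Hc(\Cc(\Pc))$ using the containment $\Hc(\Cc(\Pc)) \subseteq \text{Box}(\Pc) \cup (\widetilde{\Pc} \cap \ZZ^{N+1})$ and a triangulation of $\Pc$ into empty simplices. The aim is to identify a generator of degree exactly $d - 1$ (inherited from the Berkeley piece) and to verify that no box point of any empty simplex has a higher degree. Combined with the bound $\mu_{\text{Hilb}}(\Pc) \leq \mu_{\text{hole}}(\Pc) \leq d - 1$ from Theorem~\ref{invariant}, this yields $\mu_{\text{Hilb}}(\Pc) = d - 1$.

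Next, I would establish $\mu_{\text{midp}}(\Pc) \geq d - 2$ by producing a witness $\alpha \in \ell(d - 3)\Pc \cap \ZZ^d$ (for some small $\ell$) that cannot be expressed as a sum of $\ell$ lattice points of $(d - 3)\Pc$, reasoning in the style of the argument at the end of the proof of Theorem~\ref{Berkeley}: the witness is the projection of a suitable positive multiple of the degree $d - 1$ Hilbert generator, whose only decompositions in $\Cc(\Pc) \cap \ZZ^{N+1}$ force the use of elements of degree larger than $d - 3$. For the matching upper bound $\mu_{\text{idp}}(\Pc) \leq d - 2$ (and hence $\mu_{\text{midp}}(\Pc) \leq d-2$), I would show that $n\Pc$ has (IDP) for every $n \geq d - 2$ by the standard lift-and-regroup argument: given $\alpha \in \ell(n\Pc) \cap \ZZ^d$, lift $(\alpha, \ell n)$ into $\Cc(\Pc)$, decompose as a nonnegative integer combination of Hilbert generators, and bundle the summands into $\ell$ blocks, each landing inside $n\Pc \cap \ZZ^d$. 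The critical feature of the construction is that every occurrence of the degree $d - 1$ generator can be paired with the auxiliary lattice point to yield a single element of $n\Pc$ whenever $n \geq d - 2$.

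The main obstacle is the regrouping step at the threshold $n = d - 2$: one must show that any number of copies of the top-degree generator can always be absorbed into blocks of degree $n$, despite the arithmetic mismatch $d - 1 > d - 2$. This is precisely what the placement of the auxiliary lattice point is designed to guarantee, and it must be chosen so that the pairing succeeds for every $n \geq d - 2$ but fails at $n = d - 3$, thereby forcing $\mu_{\text{midp}}(\Pc) = \mu_{\text{idp}}(\Pc) = d - 2$ simultaneously. Verifying this dichotomy uniformly, together with checking that the auxiliary point does not introduce new Hilbert generators of degree higher than $d - 1$, is the most delicate part of the proof.
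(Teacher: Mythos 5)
Your proposal is a strategy outline rather than a proof, and the gap sits exactly where the theorem lives. You never fix the polytope: you offer two alternative constructions (adjoin one auxiliary lattice point to the Berkeley simplex $\Pc(d,d-1)$, or glue block-diagonally as in Example \ref{rei3}) and then defer every substantive verification --- that the minimal Hilbert basis still contains an element of degree exactly $d-1$, that $(d-2)\Pc$ possesses (IDP), and that $r\Pc$ fails (IDP) for all $r<d-2$ --- explicitly calling the regrouping at the threshold $n=d-2$ ``the most delicate part.'' That regrouping is the content of the theorem, so nothing has been proved. Moreover, the one semi-concrete suggestion does not work as stated: no lattice point on the ray through $\eb_1+\cdots+\eb_{d-1}+\eb_d$ lies in $(d-2)\Pc(d,d-1)$, since writing $t(\eb_1+\cdots+\eb_{d-1}+\eb_d)$ in barycentric coordinates forces coefficient sum $(d-2)t+\tfrac{t}{d-1}>d-2$; and if instead you enlarge the polytope by taking the convex hull with such a point, the smallest lattice choice is the projection of the degree-$(d-1)$ generator itself, which then becomes a degree-one point of the new polytope, so it can no longer serve as a high-degree Hilbert basis element and both $\Hc(\Cc(\Pc))$ and $\text{Box}(\Pc)$ would have to be recomputed from scratch. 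Two further slips: a witness in $\ell(d-3)\Pc$ only rules out $\mu_\text{midp}=d-3$; since $\mu_\text{midp}$ is the \emph{smallest} dilation with (IDP), you must exhibit failure for every $r\le d-3$. And the block-diagonal alternative pushes in the wrong direction: in Example \ref{rei3} gluing makes $\mu_\text{midp}$ \emph{larger} than $\mu_\text{Hilb}$, whereas here you need it smaller.

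For comparison, the paper's proof takes Ogata's polytope: the simplex with vertices $0,\eb_1,\ldots,\eb_{d-1},\eb_1+\cdots+\eb_{d-1}+M\eb_d$, $M=d(d-2)+1$, together with the translates of all $d+1$ vertices by $\eb_d$ --- so $d+1$ extra lattice points, not one --- and the engine is the explicit list of rewriting identities in its Second Step, which show that every lattice point of $\Cc(\Pc)$ other than $(u,d-1)$ admits a representation using at most one Hilbert basis element of degree $\ge 2$, all of which have degree $\le d-2$. That structural fact is what makes the bundling at dilation $d-2$ succeed, while $(u,d-1)$ remains indecomposable and keeps $\mu_\text{Hilb}=d-1$. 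Any construction along your lines would need a substitute for those identities; without one, the plan does not close.
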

\begin{proof}
Work with a fixed integer $d \geq 4$ and let $M=d(d-2)+1$. 
We define $v_{j} \in \ZZ^{d}$, $1 \leq i \leq d$, as follows:
\begin{eqnarray*}
v_j=
\begin{cases}
{\bf 0}, &j=0, \\
\eb_j, &j=1,\ldots,d-1, \\
\eb_1+\cdots+\eb_{d-1}+M\eb_d, &j=d.
\end{cases}
\end{eqnarray*}
Let $v_j'=v_j+\eb_d$ for $j=0,1,\ldots,d$. 
We write $\Pc \subset \RR^d$ for the 
integral convex polytope of dimension $d$ 
with the vertices $v_0, v_{1}, \ldots, v_{d}$ and $v'_0, v'_{1}, \ldots, v'_{d}$.
Such a convex polytope appears in \cite[Section 1]{Ogata}. 

It will be proved that $\Pc$ enjoys the required properties. 


\noindent
{\bf (First Step)}
First of all, the Hilbert basis $\Hc(\Cc(\Pc))$ must be computed. 
If $q \in \{1,\ldots,M-1\}$, then there exist unique integers 
$k$ and $s$ with $1 \leq k \leq d-2$ and $1 \leq s \leq d$ such that $q=(k-1)d+s$.
Since 
\begin{align*}
&\frac{(d-2)s-k+1}{M}(v_0,1)+\frac{M-q}{M}\sum_{j=1}^{d-1}(v_j,1)+\frac{q}{M}(v_d,1) \\
&\quad\quad\quad\quad\quad\quad\quad\quad\quad\quad\quad\quad\quad\quad
=(\eb_1+\cdots+\eb_{d-1}+q\eb_d, d-k) \in \ZZ^{d+1}, 
\end{align*}
it follows that $(\eb_1+\cdots+\eb_{d-1}+q\eb_d, d-k) \in \Cc(\Pc) \cap \ZZ^{d+1}$. 
When $k=1$ and $s=1,\ldots,d-1$, one has $q=s$ and 
\[
(\eb_1+\cdots+\eb_{d-1}+q\eb_d, d-1)=\sum_{j=1}^s (v_j',1) + \sum_{j=s+1}^{d-1} (v_j,1). 
\]
Hence $(\eb_1+\cdots+\eb_{d-1}+q\eb_d, d-1)$ 
cannot belong to $\Hc(\Cc(\Pc))$.
Now, it is routine work to show that, by considering the facets of the cone $\Cc(\Pc)$, 
the Hilbert basis $\Hc(\Cc(\Pc))$ coincides with 
\[
(\widetilde{\Pc} \cap \ZZ^{d+1})
\cup \{(\eb_1+\cdots+\eb_{d-1}+q\eb_d,d-\lfloor (q-1)/d \rfloor -1) : q=d,\ldots,M-1\}.
\]
Thus, in particular, $\mu_\text{Hilb}(\Pc)=d-1$. 


\noindent
{\bf (Second Step)}
Let 
\[u_s^{(k)}=\eb_1+\cdots+\eb_{d-1}+((k-1)d+s)\eb_d, \]
where $k=2,\ldots,d-2$ and $s=1,\ldots,d$, and 
\[
u=\eb_1+\cdots+\eb_{d-1}+d \eb_d.
\]
One can easily see the identities 
\begin{align*}
&(u,d-1)+(v_i,1)=(v_i',1)+\sum_{j=1}^{d-1}(v_j', 1) \;\;\text{for}\;\;i=0,1,\ldots,d, \\
&(u,d-1)+(v_i',1)=(v_0,1)+(v_i,1)+(u_1^{(2)},d-2) \;\;\text{for}\;\;i=0,1,\ldots,d, \\
&(u,d-1)+(u_s^{(d-2)},2)=(v_0,1)+(v_d,1)+\sum_{j=1}^{s-1}(v_j',1)+\sum_{j=s}^{d-1}(v_j,1), \\
&(u,d-1)+(u_s^{(k)},d-k)=(u_s^{(k+1)},d-k-1)+\sum_{j=0}^{d-1}(v_j,1) \;\;\text{for}\;\;
k=2,\ldots,d-3, \\
&(u,d-1)+(u,d-1)=(u_d^{(2)},d-2)+\sum_{j=0}^{d-1}(v_j,1). 
\end{align*}
It then follows that
\[(\Cc(\Pc) \cap \ZZ^{d+1}) \setminus \{(u,d-1)\} = \ZZ_{\geq 0} (\Hc(\Cc(\Pc)) \setminus \{(u,d-1)\}).\]
Moreover, if $k+k' \geq d$, then
\begin{align*}
&(u_s^{(k)},d-k)+(u_{s'}^{(k')},d-k')=\\
&\;\;\;\;\;\;\;\;\;\;\;\;\;\;\;\;\;
\begin{cases}
(v_d,1)+(u_{s+s'-1}^{(k+k'-d+1)}, 2d-k-k'-1),
&\;\;\text{if}\;\;s+s'\leq d+1, \\
(v_0,1)+(v_d,1)+(u_{s+s'-1-d}^{(k+k'-d+2)}, 2d-k-k'-2),
&\;\;\text{if}\;\;s+s'\geq d+2. 
\end{cases}\end{align*}
If $k+k' \leq d-1$, then 
\begin{align*}
&(u_s^{(k)},d-k)+(u_{s'}^{(k')},d-k')=\\
&\;\;\;\;\;\;\;\;
\begin{cases}
(u_s^{(k+k'-1)},d-k-k'+1)+\sum_{j=1}^{s'}(v_j',1)+\sum_{j=s'+1}^{d-1}(v_j,1), 
&\;\;\text{if}\;\;s'\leq d-1, \\
(u_s^{(k+k')},d-k-k')+\sum_{j=0}^{d-1}(v_j,1), 
&\;\;\text{if}\;\;s'= d. 
\end{cases}\end{align*}
Consequently, when we write $\alpha \in \Cc(\Pc) \cap \ZZ^{d+1}$ 
by using at least two $u_s^{(k)}$'s, we can reduce one $u_s^{(k)}$.
Hence $\alpha$ can be expressed by using at most one $u_s^{(k)}$ belonging to $\Hc(\Cc(\Pc))$. 


\noindent
{\bf (Third Step)}
Let $\Pc'=(d-2)\Pc$ and $\alpha \in n \Pc' \cap \ZZ^d$.
Since $d \geq 4$, one has $n(d-2) \neq d-1$. Thus $\alpha \not=u$. 
By virture of the (Second Step), there exists an expression of $\alpha$ of the form 
\[
(\alpha, n(d-2)) = v' + \sum_{j=1}^{n(d-2)-\deg(v')}(v_j'',1), 
\]
where $v' \in \{(u_s^{(k)},d-k) : k=2,\ldots,d-2, s=1,\ldots,d\}$ and each $v_j'' \in \Pc \cap \ZZ^d$. 
Since the degree of $v'$ is at most $d-2$, there exists an expression of $\alpha$ of the form
\[
\alpha=\alpha_1+\cdots+\alpha_n
\] 
with each $\alpha_i \in \Pc' \cap \ZZ^d$. Hence $\Pc'$ possesses (IDP). 


\noindent
{\bf (Fourth Step)}
By $\mu_\text{idp}(\Pc) \leq d-1$, $k\Pc$ possesses (IDP) for $k \geq d-1$. 
Thus, by (Third Step), we obtain $\mu_\text{idp}(\Pc) \leq d-2$. 
Now it is easy to see that for $r < d-2$, $r \Pc$ never possesses (IDP). 
Therefore, we conclude that $\mu_\text{midp}(\Pc)=\mu_\text{idp}(\Pc)=d-2$, as required. 
\end{proof}


\section{Restrictions on invariants}

In this section, we discuss more restrictions on the invariants. 
We consider the following question. 

\begin{Question}\label{toi}{\em 
Let $d, a_1, a_2, a_3, a_4, a_5, a_6$ be positive integers satisfying 
\begin{align*}
a_1 \leq a_2 \leq a_3 \leq a_5 \leq a_6 \leq d \;\text{ and }\;  a_1 \leq a_4 \leq a_5 \leq d-1. 
\end{align*}
Then does there exist an integral convex polytope $\Pc$ of dimension $d$ such that 
\begin{eqnarray*}
&\mu_\text{va}(\Pc)=a_1, \mu_\text{midp}(\Pc)=a_2, \mu_\text{idp}(\Pc)=a_3, 
\mu_\text{Hilb}(\Pc)=a_4, \mu_\text{hole}(\Pc)=a_5 \\
&\text{ and } \mu_\text{Ehr}(\Pc)=a_6 \;\text{?}
\end{eqnarray*}
}\end{Question}

From some easy observations, we cannot assign these positive integers freely. 
In fact, it is obvious that 
\begin{itemize}
\item if either $\mu_\text{midp}(\Pc)$ or $\mu_\text{Hilb}(\Pc)$ is 1, 
then $\mu_\text{va}(\Pc)=\mu_\text{midp}(\Pc)=\mu_\text{idp}(\Pc)=\mu_\text{Hilb}(\Pc)
=\mu_\text{hole}(\Pc)=1$; 
\item if $\mu_\text{midp}(\Pc)<\mu_\text{idp}(\Pc)$, 
then $\mu_\text{idp}(\Pc) \geq \mu_\text{midp}(\Pc) +2$. 
\end{itemize}


Moreover, we also see non-trivial restrictions. 
\begin{Theorem}\label{teiri} The following assertions hold: 
\begin{itemize}
\item[(1)] if $\mu_\text{{\em midp}}(\Pc) \geq (d-1)/2$, 
then $\mu_\text{{\em idp}}(\Pc)=\mu_\text{{\em midp}}(\Pc)$; 
\item[(2)] if $\mu_\text{{\em midp}}(\Pc) \leq (d-1)/2$, 
then $\mu_\text{{\em idp}}(\Pc) \leq (d-3)/2+\mu_\text{{\em midp}}(\Pc)$. 
\end{itemize}
\end{Theorem}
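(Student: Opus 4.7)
Write $k_0 := \mu_\text{midp}(\Pc)$, so $k_0\Pc$ has (IDP) by hypothesis. The backbone of both parts is a single bookkeeping scheme that I would set up first. Given $n \geq k_0$, $\ell \geq 2$, and $\alpha \in \ell(n\Pc) \cap \ZZ^N$, I would invoke the Gordan--Carath\'eodory presentation already used in the proof of Theorem~\ref{invariant} to write
\[
(\alpha, \ell n) \;=\; (\alpha_0, n_0) \;+\; \sum_{i=1}^{\ell n - n_0} (\alpha_i, 1),
\]
with $(\alpha_0, n_0) \in \text{Box}(\Pc) \cup \{0\}$, each $(\alpha_i, 1) \in \widetilde{\Pc} \cap \ZZ^{N+1}$, and $n_0 \leq \mu_\text{hole}(\Pc) \leq d - 1$. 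When $n_0 = 0$ one simply groups the $\ell n$ lattice points into $\ell$ packets of $n$; the nontrivial case is $n_0 > 0$.

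In that case, I would pick a positive integer $m$ with $mk_0 \geq n_0$ and $mk_0 \leq \ell n$, combine $(\alpha_0, n_0)$ with $mk_0 - n_0$ of the $(\alpha_i, 1)$'s, and apply (IDP) of $k_0\Pc$ to rewrite the resulting degree-$mk_0$ lattice point as $\sum_{j=1}^m (\beta_j, k_0)$ with each $\beta_j \in k_0\Pc \cap \ZZ^N$. The $m$ pieces of $k_0\Pc$ and the remaining $\ell n - mk_0$ lattice points of $\Pc$ are then distributed into $\ell$ packets: packet $i$ receives $a_i$ pieces of $k_0\Pc$ and $n - a_i k_0$ lattice points of $\Pc$, so its total lies in $a_i k_0 \Pc + (n - a_i k_0)\Pc = n\Pc$ and is a lattice point of $n\Pc$. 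The system $\sum_i a_i = m$, $0 \leq a_i \leq \lfloor n/k_0 \rfloor$ has a nonnegative-integer solution precisely when $m \leq \ell \lfloor n/k_0 \rfloor$, so this single feasibility condition governs everything.

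For part (1), the hypothesis $k_0 \geq (d-1)/2$ forces $n_0 \leq d - 1 \leq 2k_0$, so $m = 2$ works and the feasibility $2 \leq \ell \lfloor n/k_0 \rfloor$ is immediate for $\ell \geq 2$ and $n \geq k_0$. This shows $n\Pc$ has (IDP) for every $n \geq k_0$, giving $\mu_\text{idp}(\Pc) \leq \mu_\text{midp}(\Pc)$; the reverse inequality is definitional, establishing equality.

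Part (2) is where the real work lives, since $k_0 \leq (d-1)/2$ means the required $m$ may exceed $2$. The binding case is $\ell = 2$ together with $n_0$ as large as $d - 1$, which reduces to verifying the arithmetic inequality $\lceil (d-1)/k_0 \rceil \leq 2 \lfloor n/k_0 \rfloor$ for $n \geq (d-3)/2 + k_0$; the cases $\ell \geq 3$ and smaller $n_0$ are easier to accommodate with the same choice of $m$. The \emph{main obstacle} is precisely this arithmetic bookkeeping across the various residues of $d-1$ and $n$ modulo $k_0$ — this is what calibrates the bound $(d-3)/2 + k_0$ in the conclusion, and in borderline configurations it may be necessary to inflate $m$ beyond $\lceil n_0 / k_0 \rceil$ so that $m$ actually splits as $a_1 + a_2$ under the cap $a_i \leq \lfloor n/k_0 \rfloor$.
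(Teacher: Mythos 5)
Your scheme is essentially the paper's argument in different clothing. The paper peels off degree-one lattice points using Lemma~\ref{hodai} (any lattice point of $n\Pc$ with $n\ge d-1$ is a lattice point of $(d-1)\Pc$ plus $n-d+1$ lattice points of $\Pc$), whereas you peel them off via the Gordan--Carath\'eodory presentation, leaving a box element of degree $n_0\le \mu_\text{hole}(\Pc)\le d-1$; after that, both arguments absorb the degree-$(\le d-1)$ remainder into a suitable multiple of $k_0\Pc$ using (IDP) of $k_0\Pc$ and redistribute everything into $\ell$ packets of degree $n$. Your part (1) is complete and correct: $m=2$ works because $n_0\le d-1\le 2k_0$, and the split $a_1=a_2=1$ is always feasible for $n\ge k_0$.

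Part (2), however, is not finished as written. You correctly reduce it to the feasibility condition $m\le \ell\lfloor n/k_0\rfloor$ with $m=\lceil n_0/k_0\rceil$ and identify the binding case $\ell=2$, $n_0=d-1$, but you then declare the resulting inequality $\lceil (d-1)/k_0\rceil \le 2\lfloor n/k_0\rfloor$ (for $n\ge (d-3)/2+k_0$) to be ``the main obstacle'' without proving it --- yet that inequality is exactly where the bound $(d-3)/2+\mu_\text{midp}(\Pc)$ is earned, so omitting it leaves part (2) unproved. The inequality does hold and the check is short: put $k=k_0$ and $j=\lceil (d-1)/k\rceil$, so $d-1\ge (j-1)k+1$. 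If $j=2p$, then $n\ge (d-3)/2+k\ge ((2p-1)k-1)/2+k=pk+(k-1)/2\ge pk$, so $2\lfloor n/k\rfloor\ge 2p=j$. If $j=2p+1$, then $n\ge (2pk-1)/2+k=(p+1)k-1/2$, hence $n\ge (p+1)k$ since $n$ is an integer, and $2\lfloor n/k\rfloor\ge 2p+2>j$. Also, your closing hedge about ``inflating $m$ beyond $\lceil n_0/k_0\rceil$'' is backwards: enlarging $m$ only tightens the constraint $m\le \ell\lfloor n/k_0\rfloor$ and consumes more degree-one points, so the minimal choice $m=\lceil n_0/k_0\rceil$ is always the right one and no inflation is ever needed. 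With these repairs your argument is a valid alternative to the paper's proof, which runs the same bookkeeping with $\ell=\min\{i: ik\ge d-1\}$, $p=\lfloor n/k\rfloor$ and $q=n-pk$ after invoking Lemma~\ref{hodai}.
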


Before proving these, we prove the following lemma. 
\begin{Lemma}[cf. {\cite[Theorem 2.2.12]{CLS}}]\label{hodai}
Let $\Pc \subset \RR^N$ be an integral convex polytope of dimension $d$. 
Given $\alpha \in n \Pc \cap \ZZ^N$ for $n \geq d-1$, $\alpha$ can be written like 
$$\alpha=\alpha'+\alpha_1+\cdots+\alpha_{n-d+1},$$ 
where $\alpha' \in (d-1) \Pc \cap \ZZ^N$ and $\alpha_1,\ldots,\alpha_{n-d+1} \in \Pc \cap \ZZ^N$. 
\end{Lemma}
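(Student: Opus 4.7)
The plan is to work with a lattice triangulation $\mathcal{T}$ of $\Pc$ all of whose maximal $d$-simplices are empty---that is, each such simplex contains no lattice points beyond its own vertices. Such a triangulation exists for any lattice polytope: performing a placing (equivalently, pulling) triangulation in which every point of $\Pc \cap \ZZ^N$ is used as a vertex produces maximal simplices with exactly $d+1$ lattice points, and those are the vertices. The reason for insisting on this choice will become clear below.

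Given $\alpha \in n\Pc \cap \ZZ^N$ with $n \geq d-1$, I pick a maximal simplex $\Delta = \conv(v_0, \ldots, v_d)$ of $\mathcal{T}$ with $\alpha \in n\Delta$ and write $\alpha = \sum_{i=0}^d \lambda_i v_i$ with $\lambda_i \geq 0$ and $\sum \lambda_i = n$. I split $\lambda_i = k_i + r_i$ with $k_i = \lfloor \lambda_i \rfloor \in \ZZ_{\geq 0}$ and $r_i \in [0,1)$. Since $\sum r_i v_i = \alpha - \sum k_i v_i$ is a lattice vector, $r := \sum r_i$ is a nonnegative integer, and the bounds $r_i < 1$ force $r \leq d$. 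The crucial claim is that $r \leq d-1$. For if $r = d$, then every $r_i$ must be strictly positive (a sum of $d+1$ terms in $[0,1)$, at least one of which were $0$, could not reach $d$), and then the lattice vector $w := \sum(1-r_i) v_i = \sum_j v_j - \sum_i r_i v_i$ would have all barycentric coordinates strictly in $(0,1)$ and summing to $1$, placing it in the relative interior of $\Delta$---in contradiction with the emptiness of $\Delta$.

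With $r \leq d-1$ in hand, $\sum k_i = n - r \geq n - d + 1$, so I can choose integers $0 \leq k'_i \leq k_i$ with $\sum k'_i = n - d + 1$. Take the $\alpha_j$'s to be the list in which each $v_i$ appears $k'_i$ times (altogether $n-d+1$ lattice points of $\Pc$), and set $\alpha' = \alpha - \sum k'_i v_i = \sum (\lambda_i - k'_i) v_i$. Then $\alpha'$ is a lattice vector whose barycentric coordinates in $\Delta$ are nonnegative and sum to $n - (n-d+1) = d-1$, so $\alpha' \in (d-1)\Delta \cap \ZZ^N \subseteq (d-1)\Pc \cap \ZZ^N$, as required. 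The only step that is not routine bookkeeping is the initial choice of an empty-simplex triangulation; with an arbitrary lattice triangulation the case $r=d$ is a genuine obstruction (the simplex $\Delta$ could easily have an interior lattice point), and it is precisely the empty-simplex hypothesis that kills this case.
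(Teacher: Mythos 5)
Your argument is correct and is essentially the proof the paper points to (the argument of \cite[Theorem 2.2.12]{CLS}, i.e., the Ewald--Wessels/Liu--Trotter--Ziegler trick already used in Theorem \ref{invariant} for $\mu_\text{hole}(\Pc)\leq d-1$): triangulate $\Pc$ into empty simplices using all lattice points as vertices, round down the barycentric coordinates, and exclude fractional degree $d$ via the interior lattice point $\sum_i(1-r_i)v_i$. Two harmless slips that do not affect correctness: the integrality of $r$ follows simply from $r=n-\sum_i k_i$ (the fact that $\sum_i r_i v_i$ is a lattice vector does not by itself force $\sum_i r_i\in\ZZ$), and ``placing (equivalently, pulling)'' should read ``pulling,'' since a placing triangulation need not use every lattice point of $\Pc$ as a vertex.
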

A proof of this lemma appears in the proof of \cite[Theorem 2.2.12]{CLS}.

\begin{proof}[Proof of Theorem \ref{teiri}]
(1) It suffices to show that for an integral convex polytope $\Pc \subset \RR^N$ of dimension $d$ 
with $\mu_\text{midp}(\Pc) \geq (d-1)/2$, $n\Pc$ possesses (IDP) for every $n \geq \mu_\text{midp}(\Pc)$. 

Let $k=\mu_\text{midp}(\Pc)$. 
Let $n \geq k$ and let $\alpha \in m(n\Pc) \cap \ZZ^N$ for $m \geq 2$. 
Since $mn \geq 2n \geq 2k \geq d-1$, 
thanks to Lemma \ref{hodai}, we obtain 
$$\alpha=\alpha'+\alpha_1+\cdots+\alpha_{mn-d+1},$$ 
where $\alpha' \in (d-1)\Pc \cap \ZZ^N$ and 
$\alpha_1,\ldots,\alpha_{mn-d+1} \in \Pc \cap \ZZ^N$. 
Moreove, since $k\Pc$ has (IDP), 
there are $\alpha_1'$ and $\alpha_2'$ in $k\Pc \cap \ZZ^N$ such that 
$$\alpha_1'+\alpha_2'=\alpha'+\alpha_{mn-2k+1}+\cdots+\alpha_{mn-d+1} \in 2k \Pc \cap \ZZ^N.$$ 
Therefore, 
$$\alpha=\underbrace{\alpha_1'+\sum_{i=1}^{n-k}\alpha_i}_{n \Pc \cap \ZZ^N}+
\underbrace{\alpha_2'+\sum_{i=n-k+1}^{2(n-k)}\alpha_i}_{n \Pc \cap \ZZ^N}+
\sum_{i=1}^{m-2}\underbrace{\sum_{j=1}^n\alpha_{2(n-k)+(i-1)n+j}}_{n \Pc \cap \ZZ^N}.$$
This implies that $n\Pc$ possesses (IDP). \\
(2) It suffices to prove that for an integral convex polytope $\Pc \subset \RR^N$ 
of dimension $d$ with $\mu_\text{midp}(\Pc) \leq (d-1)/2$, 
$n\Pc$ possesses (IDP) for every $n \geq (d - 3)/2+\mu_\text{midp}(\Pc)$. 

For $m \geq 2$, let $\alpha \in m(n\Pc) \cap \ZZ^N$. 
Since $mn \geq 2n \geq d-3 + 2 \mu_\text{midp}(\Pc) \geq d-1$, thanks to Lemma \ref{hodai}, we obtain 
$$\alpha=\alpha'+\alpha_1+\cdots+\alpha_{mn-d+1},$$ 
where $\alpha' \in (d-1)\Pc \cap \ZZ^N$ and 
$\alpha_1,\ldots,\alpha_{mn-d+1} \in \Pc \cap \ZZ^N$. 

Let $k=\mu_\text{midp}(\Pc)$ and $\ell = \min\{ i : ik \geq d-1\}$. 
Since $k\Pc$ has (IDP), an element 
$\alpha'+\alpha_{mn-\ell k +1}+\cdots+\alpha_{mn - d+1}$ belonging to $\ell (k \Pc) \cap \ZZ^N$ 
can be written such as $\alpha_1'+\cdots+\alpha_\ell'$, 
where $\alpha_i' \in k \Pc \cap \ZZ^N$. 
Remark that $\ell k - d+1 \leq mn- d +1$ because 
$$mn - \ell k \geq 2n - \ell k \geq d-3+2k-\ell k \geq d-2 - (\ell - 1) k \geq 0.$$
Thus $\alpha$ can be rewritten as 
$$\alpha=\alpha_1'+\cdots+\alpha_\ell' + \alpha_1+\cdots+\alpha_{mn-\ell k}.$$ 
Let $p=\lfloor n/k \rfloor$ and $q=n-pk$, i.e., $n=pk+q$ with $0 \leq q \leq k-1$. 
When $p \geq \ell$, since $n \geq \ell k$, 
it follows easily that $\alpha$ can be written as a sum of $m$ elements of $n \Pc \cap \ZZ^N$. 
Assume that $p < \ell$. Then we have $mn-\ell k \geq q$ and $n - (\ell-p)k \geq 0$. In fact, 
\begin{align*}
mn - \ell k -q &\geq 2n-\ell k -q=n-(\ell-p)k \\
&\geq d-3+2k-\ell k -(k-1) = d-2 - (\ell -1 )k \geq 0.
\end{align*}
Thus we obtain that 
$$\alpha=\underbrace{\sum_{i=1}^p\alpha_i'+\sum_{j=1}^q \alpha_j}_{n \Pc \cap \ZZ^N}
+\underbrace{\sum_{i=p+1}^\ell \alpha_i'+\sum_{j=1}^{n-(\ell-p)k}\alpha_{q+j}}_{n \Pc \cap \ZZ^N}
+\sum_{r=1}^{m-2}\underbrace{\sum_{i=1}^n \alpha_{n-(\ell-p)k+q+(r-1)n+i}}_{n \Pc \cap \ZZ^N}.$$
This says that $n \Pc$ possesses (IDP), as desired. 
\end{proof}

As an immediate corollary of Theorem \ref{teiri}, we obtain the following. 
\begin{Corollary}
If $\mu_\text{{\em idp}}(\Pc)=d-1$, then $\mu_\text{{\em midp}}(\Pc)=d-1$. 
\end{Corollary}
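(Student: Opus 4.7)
The plan is to prove the contrapositive: assuming $\mu_\text{midp}(\Pc) \leq d-2$, I will show $\mu_\text{idp}(\Pc) \leq d-2 < d-1$ directly via Theorem \ref{teiri}. Set $a = \mu_\text{midp}(\Pc)$. The two cases of Theorem \ref{teiri} partition the relevant range of $a$ at the threshold $(d-1)/2$, so I will split on whether $a \geq (d-1)/2$ or $a < (d-1)/2$.

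In the first case, Theorem \ref{teiri}(1) gives $\mu_\text{idp}(\Pc) = \mu_\text{midp}(\Pc) = a \leq d-2$, which is what we want. In the second case, Theorem \ref{teiri}(2) yields
\[
\mu_\text{idp}(\Pc) \,\leq\, \frac{d-3}{2} + a \,<\, \frac{d-3}{2} + \frac{d-1}{2} \,=\, d-2,
\]
again giving the desired bound. Hence in either case $\mu_\text{idp}(\Pc) \leq d-2$, so if $\mu_\text{idp}(\Pc) = d-1$ we must have had $a = d-1$ to begin with, i.e.\ $\mu_\text{midp}(\Pc) = d-1$.

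There is no real obstacle here; the statement is a direct arithmetic consequence of the two bounds packaged in Theorem \ref{teiri}, since the interval $[1,d-2]$ of possible values of $\mu_\text{midp}(\Pc)$ strictly below $d-1$ is entirely covered by those two cases, and each case forces $\mu_\text{idp}(\Pc)$ to stay at most $d-2$. The only subtlety worth mentioning is that if $d$ is odd and $a=(d-1)/2$ one could apply either case, but both give the same conclusion, so no ambiguity arises.
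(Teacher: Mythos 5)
Your proof is correct and follows essentially the same route as the paper: argue the contrapositive, split on whether $\mu_\text{midp}(\Pc)$ is at least or below $(d-1)/2$, and apply Theorem \ref{teiri}(1) or (2) respectively to force $\mu_\text{idp}(\Pc) \leq d-2$. The only cosmetic difference is that the paper uses the non-strict bound $\mu_\text{midp}(\Pc) \leq (d-1)/2$ in the second case (obtaining $\mu_\text{idp}(\Pc) \leq d-2 < d-1$), while you use the strict inequality; both are valid.
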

\begin{proof}
Suppose that $\mu_\text{midp}(\Pc)\not=d-1$. In particular, $\mu_\text{midp}(\Pc) < d-1$. 
If $\mu_\text{midp}(\Pc) \geq (d-1)/2$, 
then $\mu_\text{idp}(\Pc)=\mu_\text{midp}(\Pc)<d-1$ by Theorem \ref{teiri} (1). 
Moreover, if $\mu_\text{midp}(\Pc) \leq (d-1)/2$, then 
$\mu_\text{idp}(\Pc) \leq (d-3)/2 + \mu_\text{midp}(\Pc) \leq (d-3)/2+(d-1)/2 < d-1$ by Theorem \ref{teiri} (2). 
Hence, $\mu_\text{idp}(\Pc)$ is never equal to $d-1$, as desired. 
\end{proof}


\begin{Question}\label{mondai} Work with the same notation as above. 
\begin{itemize}
\item[(1)] Is there some relation between $\mu_\text{{\em midp}}(\Pc)$, $\mu_\text{{\em idp}}(\Pc)$ 
and $\mu_\text{{\em Hilb}}(\Pc)$?
For example, does there exist an example of polytope $\Pc$ such that 
$\mu_\text{{\em midp}}(\Pc) < \mu_\text{{\em Hilb}}(\Pc) < \mu_\text{{\em idp}}(\Pc)$?
\item[(2)] Is it true that 
$\mu_\text{{\em midp}}(\Pc)=\mu_\text{{\em idp}}(\Pc)=\mu_\text{{\em Hilb}}(\Pc)=2$ 
if $\mu_\text{{\em va}}(\Pc)=1$? 
\end{itemize}
\end{Question}


Moreover, the following is also interesting. 
\begin{Question}\label{minkowski}
For $n,m \geq 1$, does $(n+m)\Pc$ possess {\em (IDP)} if $n\Pc$ and $m\Pc$ possess {\em (IDP)}? 
\end{Question}

In general, Question \ref{minkowski} is no longer true for a Minkowski sum 
of polytopes having (IDP). For example, 
$\con(\{(0,0,0),(1,0,0),(0,1,0)\})$ and $\con(\{(0,0,0),(1,1,3)\})$ possess (IDP), 
but their Minkowski sum does not.

\begin{Remark}
In the book \cite{BG}, two notions closely related to (IDP) are described. 
Let $\Pc \subset \RR^N$ be an integral convex polytope of dimension $d$. 
We say that $\Pc$ is {\em integrally closed} if $\Pc$ satisfies 
$$\ZZ_{\geq 0}(\widetilde{\Pc}\cap \ZZ^{N+1})=\Cc(\Pc) \cap \ZZ^{N+1}$$ 
and we say that $\Pc$ is {\em normal} if $\Pc$ satisfies 
$$\ZZ_{\geq 0}(\widetilde{\Pc}\cap \ZZ^{N+1})=\Cc(\Pc) \cap \ZZ(\widetilde{\Pc} \cap \ZZ^{N+1}).$$ 
Then $\Pc$ satisfying (IDP) is equivalent $\Pc$ being integrally closed, 
but not necessarily equivalent to $\Pc$ being normal. 
\end{Remark}

\section{The case of dilated edge polytopes}

Finally, we discuss the case of edge polytopes. 

Recall that for a connected simple graph $G$ on the vertex set $\{1,\ldots,d\}$ 
with the edge set $E(G)$, the {\em edge polytope} of $G$ is 
the convex polytope $\Pc_G \subset \RR^d$ 
which is the convex hull of $\{\eb_i+\eb_j : \{i,j\} \in E(G)\}$. Also:
\begin{itemize}
\item An {\em odd} cycle is a cycle with odd length. 
\item A cycle $C$ in $G$ is called {\em minimal} if $C$ possesses no chord. 
\item A pair of distinct odd cycles $C$ and $C'$ in $G$ is said to be {\em exceptional} 
if there is no bridge between $C$ and $C'$ in $G$. 
\item We say that $G$ satisfies the {\em odd cycle condition} 
if each pair of distinct odd cycles is not exceptional. 
\end{itemize}


It is known that $\Pc_G$ has (IDP) if and only if $G$ satisfies 
the odd cycle condition (\cite[Corollary 2.3]{OH}). 

\begin{Proposition}
Let $G$ be a connected simple graph on 
$\{1,\ldots,d\}$ which does not satisfy the odd cycle condition. 
For distinct odd cycles $C_1$ and $C_2$, let 
\begin{align*}
m(C_1,C_2)=\frac{\ell(C_1)+\ell(C_2)}{2}, \;\; 
\end{align*} 
where $\ell(C_i)$ denotes the length of a cycle $C_i$. 
For an edge polytope $\Pc_G$, one has 
\begin{align*}
\mu_\text{{\em va}}(\Pc_G)&=\mu_\text{{\em Hilb}}(\Pc_G)\\
&=\max\left\{m(C,C') : (C,C') \; \text{{\em 
is an exceptional pair of minimal odd cycles}}\right\} 
\end{align*}
and $$\mu_\text{{\em hole}}(\Pc_G)=\max\left\{\sum_{i=1}^l m(C_{2i-1},C_{2i})\right\},$$ 
where $C_1,\ldots,C_{2l}$ are distinct and 
each of $(C_{2i-1},C_{2i})$ is an exceptional pair of minimal odd cycles. 
\end{Proposition}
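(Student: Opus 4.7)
The plan is to base everything on an explicit description of the minimal Hilbert basis $\Hc(\Cc(\Pc_G))$, namely
\[
\Hc(\Cc(\Pc_G)) = \{(\eb_i+\eb_j,1) : \{i,j\} \in E(G)\} \cup \{h(C,C') : (C,C') \text{ exceptional, minimal}\},
\]
where $h(C,C') := (\chi_{V(C)}+\chi_{V(C')}, m(C,C'))$ with $\chi_S := \sum_{v\in S} \eb_v$. Membership $h(C,C') \in \Cc(\Pc_G)\cap\ZZ^{d+1}$ follows by summing the edge vectors of $C$ and of $C'$ to obtain $(2\chi_{V(C)}+2\chi_{V(C')},\ell(C)+\ell(C'))$ and halving. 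Irreducibility is a parity argument: by exceptionality, any decomposition of $h(C,C')$ as a sum of edge vectors uses only edges inside $V(C)$ or inside $V(C')$, but a degree count then forces $\ell(C)$ and $\ell(C')$ to be even, contradiction; minimality rules out reduction via smaller odd cycles inside $V(C)\cup V(C')$.

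For the converse---that these generators span the integer points of the cone---I would induct on the degree $n$. Write a lattice point $(v,n)\in\Cc(\Pc_G)$ as $\sum_e c_e\chi_e$ with $c_e\in\tfrac12\ZZ_{\geq 0}$ and $\sum c_e = n$; the edges of half-integer weight form a subgraph with an even number of odd-degree vertices, so they decompose into edge-disjoint pairs of odd cycles together with extra edges. A chord in an odd cycle splits it into a shorter odd cycle plus an even cycle (absorbed into integer edge contributions), so each extracted odd cycle may be assumed minimal; a bridge between an extracted pair reduces it to the bridge edge plus shorter configurations. After finitely many such reductions, only $h$-generators for exceptional pairs of minimal odd cycles remain. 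This gives $\mu_\text{Hilb}(\Pc_G) = m_0 := \max m(C,C')$ as claimed. Combined with Theorem~\ref{invariant} we obtain $\mu_\text{va}(\Pc_G) \leq m_0$; for the reverse inequality I would show that for $k < m_0$ and a maximizing pair $(C,C')$, the analog of $h(C,C')$ remains an indispensable Hilbert basis element of the cone generated by $k\Pc_G - k\alpha$ for a suitable vertex $\alpha$ on $C$, again by the parity argument applied to dilated edge vectors.

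For $\mu_\text{hole}(\Pc_G)$, I would analyze $\text{Box}(\Sc)$ for empty $d$-simplices $\Sc \subseteq \Pc_G$. A box element $\sum r_i(w_i,1)$ with $0 \leq r_i < 1$ that is not a nonnegative integer combination of edges corresponds, via the half-integer rewriting above, to a disjoint sum $\sum_{i=1}^l h(C_{2i-1},C_{2i})$ for pairwise vertex-disjoint exceptional pairs of minimal odd cycles $(C_{2i-1},C_{2i})$ supported on the edges of $\Sc$; its degree is exactly $\sum_i m(C_{2i-1},C_{2i})$. Conversely, starting from any collection of $2l$ distinct odd cycles of $G$ forming $l$ pairwise disjoint exceptional pairs, one extends their edges to a set of $d+1$ affinely independent edge-vectors spanning an empty $d$-simplex, realizing the corresponding hole degree. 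The maximum over all such configurations then equals $\mu_\text{hole}(\Pc_G)$.

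The main obstacle will be the cycle-surgery in the Hilbert-basis argument: rigorously reducing an arbitrary half-integer edge decomposition into pairs of minimal exceptional odd cycles while tracking exceptionality under removal of chords and bridges requires a careful combinatorial induction. A secondary difficulty is the realizability step in the $\mu_\text{hole}$ analysis---verifying that every desired collection of pairwise vertex-disjoint exceptional pairs can be extended to an empty $d$-simplex of $\Pc_G$---which will rely on structural results for edge polytopes from~\cite{OH}.
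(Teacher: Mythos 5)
Your plan inverts the paper's division of labor, and the places you leave open are exactly the hard parts. The structural facts you propose to prove from scratch --- that the only irreducible elements of degree at least $2$ in $\Cc(\Pc_G)\cap\ZZ^{d+1}$ are the points $e(C,C')$ attached to exceptional pairs of minimal odd cycles, and that $\text{Box}(\Pc_G)$ consists precisely of the sums $\sum_{i=1}^{l}e(C_{2i-1},C_{2i})$ over disjoint such pairs --- are precisely \cite[Theorem 2.2]{OH}, which the paper simply cites; once these are granted, $\mu_\text{Hilb}(\Pc_G)=M$ and the stated formula for $\mu_\text{hole}(\Pc_G)$ follow immediately from $e(C,C')\in m(C,C')\Pc_G$. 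Your ``cycle surgery'' induction (decomposing a half-integral edge weighting into vertex-disjoint pairs of odd cycles while tracking minimality and exceptionality under chord and bridge reductions) and your realizability step for holes are not routine details but the entire content of that cited theorem, and you acknowledge leaving them unresolved.

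The genuinely new content of the Proposition is the inequality $\mu_\text{va}(\Pc_G)\geq M$, and there your sketch has a real gap. The claim that ``the analog of $h(C,C')$ remains an indispensable Hilbert basis element of the cone generated by $k\Pc_G-k\alpha$ \dots{} by the parity argument applied to dilated edge vectors'' does not work as stated: lattice points of $k\Pc_G$ are not in general sums of $k$ edge vectors --- as soon as $k\geq m(D,D')$ for some exceptional pair $(D,D')$, they include points of the form $e(D,D')$ plus edge vectors --- so a parity count of edges cannot by itself exclude decompositions in the dilated polytope; you would also have to check that the translated hole is not itself a lattice point of $k\Pc_G-k\alpha$ and that it is irreducible in the vertex-cone semigroup, none of which is argued. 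Moreover, very ampleness is an asymptotic condition, so a single non-decomposable point at one level does not suffice: one needs violations at arbitrarily high multiples of $k\Pc_G$. The paper's proof supplies exactly this missing piece: fixing an edge $\{i_1,i_2\}$ of $C_1$ in a maximizing exceptional pair, it shows that $e(C_1,C_2)+(mn-M)(\eb_{i_1}+\eb_{i_2})$ lies in $m(n\Pc_G)\cap\ZZ^d$ for all sufficiently large $m$ yet is never a sum of $m$ lattice points of $n\Pc_G$ when $n<M$, using that the pair has no bridge and the cycles are minimal. Some argument of this kind is what your proposal needs and does not provide.
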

%
%
%
%
\begin{proof}
In the case of edge polytopes, by \cite[Theorem 2.2]{OH}, 
$\Hc(\Cc(\Pc_G))$ and Box$(\Pc_G)$ can be written 
in terms of exceptional pairs of minimal odd cycles as follows: 
For a pair of minimal odd cycles $C$ and $C'$, let 
$$e(C,C')=\sum_{i \in V(C) \cup V(C')}\eb_i,$$ 
where $V(C)$ denotes the set of vertices of a cycle $C$. Then we have 
$$\Hc(\Cc(\Pc_G))=\{e(C, C') : (C, C') \text{ is an exceptional pair of minimal odd cycles}\}$$ 
and \begin{align}\label{box}
\text{Box}(\Pc_G)=\left\{\sum_{i=1}^le(C_{2i-1},C_{2i})\right\},\end{align}
where $C_1,\ldots,C_{2l}$ are distinct and each of $(C_{2i-1},C_{2i})$ is an exceptional pair of minimal odd cycles. 
Since $e(C_1,C_2) \in m(C_1,C_2)\Pc_G \cap \ZZ^N$, we obtain $\mu_\text{Hilb}(\Pc_G)=M$, 
where $M=\max\{m(C,C') : (C,C') \; \text{is an exceptional pair of minimal odd cycles}\}$, 
and $\mu_\text{hole}(\Pc_G) = \max\left\{\sum_{i=1}^l m(C_{2i-1},C_{2i})\right\}$. 
Our goal is to show $\mu_\text{va}(\Pc_G) \geq M$. 

Let $C_1$ and $C_2$ be distinct minimal odd cycles and 
let $(C_1,C_2)$ be exceptional and $M=m(C_1,C_2)$. 
Assume that $\{i_1,i_2\}$ is one edge in $C_1$. For each positive integer $\ell$, 
since there is no bridge between $C_1$ and $C_2$ and these cycles are minimal, one has 
$$e(C_1,C_2)+\ell(\eb_{i_1}+\eb_{i_2}) \in 
((M+\ell)\Pc_G \cap \ZZ^N) \setminus \{\alpha_1+\cdots+\alpha_{M+\ell} : \alpha_i \in \Pc_G \cap \ZZ^N\}.$$
Fix a positive integer $n$ with $n <M$. 
For every integer $m \geq \min\{ k : kn \geq M \}$, one has 
$$e(C_1,C_2)+(mn-M)(\eb_{i_1}+\eb_{i_2}) \in m(n \Pc_G) \cap \ZZ^N.$$ 
Since $n<M$, this integer point cannot be written as a sum of $m$ elements belonging to $n \Pc_G \cap \ZZ^N$. 
This says that $n \Pc_G$ is never very ample. Therefore, $\mu_\text{va}(\Pc_G) \geq M$, as desired. 
\end{proof}

On $\mu_\text{midp}(\Pc_G)$ and $\mu_\text{idp}(\Pc_G)$ of edge polytopes $\Pc_G$, 
these are not necessarily equal to $M$, although we still have 
$\mu_\text{idp}(\Pc_G) \geq \mu_\text{midp}(\Pc_G) \geq M$ because of $\mu_\text{va}(\Pc_G)=M$. 
\begin{Example}
Let us consider the graph $G$ on the vertex set $\{1,\ldots,25\}$ with the edge set 
\begin{eqnarray*}
E(G)=\{\{3i+1,3i+2\},\{3i+2,3i+3\},\{3i+1,3i+3\},\{3i+1,25\} : i=0,\ldots,7\}. 
\end{eqnarray*}
Then each of exceptional pairs of minimal odd cycles in this graph 
consists of two cycles of length 3. Thus 
we have $\mu_\text{va}(\Pc_G)=\mu_\text{Hilb}(\Pc_G)=3$. 
Moreover, since this graph contains four distinct exceptional pairs of minimal odd cycles, 
one has $\mu_\text{hole}(\Pc_G)=12$. In addition, we also see that $3\Pc_G$ has (IDP). 
Hence $\mu_\text{midp}(\Pc_G)=3$. 
On the other hand, neither $4\Pc_G$ nor $5\Pc_G$ has (IDP). In fact, 
\begin{align*}
&(\underbrace{1,\ldots,1}_{24},0) \in 3(4\Pc_G) \cap \ZZ^{25} \setminus 
\{\alpha_1+\alpha_2+\alpha_3 : \alpha_i \in 4\Pc_G \cap \ZZ^{25}\}\;\;\;\text{and} \\
&(\underbrace{1,\ldots,1}_{20},0,0,0,0,0) \in 2(5\Pc_G) \cap \ZZ^{25} \setminus 
\{\alpha_1+\alpha_2 : \alpha_i \in 5\Pc_G \cap \ZZ^{25}\}. 
\end{align*}
Thus $\mu_\text{idp}(\Pc_G) \geq 6$. In fact, one can show that $\mu_\text{idp}(\Pc_G)=6$. 

Let us consider the graph $G'$ on the vertex set $\{1,\ldots,30\}$ with the edge set 
$$E(G')=E(G) \cup \{\{25+i,26+i\},\{26,30\} : i=0,1,2,3,4\}.$$ 
Then there is an exceptional pair consisting of minimal odd cycles of length 3 and 5. 
Thus $\mu_\text{va}(\Pc_{G'})=\mu_\text{Hilb}(\Pc_{G'})=4$. 
Moreover, one has $\mu_\text{hole}(\Pc_{G'})=13$. In addition, similar to the case of the above $G$, 
neither $4\Pc_{G'}$ nor $5\Pc_{G'}$ has (IDP). 
However, we can check that $k\Pc_{G'}$ has (IDP) for $k \geq 6$, 
implying $\mu_\text{midp}(\Pc_{G'})=\mu_\text{idp}(\Pc_{G'})=6$. 
\end{Example}

\end{document}